\newtheorem{thm}{{\bf Theorem}}[section]
\newtheorem{lemma}[thm]{{\bf Lemma}}
\newtheorem{prop}[thm]{{\bf Proposition}}
\newtheorem*{rmk}{{\bf Remark}}
\newcommand{\RNum}[1]{\uppercase\expandafter{\romannumeral #1\relax}}
\begin{document}


\title[Normalizer of Twisted Chevalley Groups]{Normalizer of Twisted Chevalley Groups over Commutative Rings}


\author{Shripad M. Garge}
\address{Department of Mathematics, 
	Indian Institute of Technology Bombay,\newline \indent
	Powai, Mumbai 400076, India}
\email{\href{mailto:shripad@math.iitb.ac.in}{shripad@math.iitb.ac.in}, \href{mailto:smgarge@gmail.com}{smgarge@gmail.com}}

\author{Deep H. Makadiya}
\address{Department of Mathematics, 
	Indian Institute of Technology Bombay,\newline \indent
	Powai, Mumbai 400076, India}
\email{\href{mailto:deepmakadia25.dm@gmail.com}{deepmakadia25.dm@gmail.com}}




\begin{abstract}   
    Let $R$ be a commutative ring with unity. Consider the twisted Chevalley group $G_{\pi, \sigma} (\Phi, R)$ of type $\phi$ over $R$ and its elementary subgroup $E'_{\pi, \sigma} (\Phi, R)$. 
    This paper investigates the normalizers of $E'_{\pi, \sigma}(\Phi, R)$ and $G_{\pi, \sigma}(\Phi, R)$ in the larger group $G_{\pi, \sigma}(\Phi, S)$, where $S$ is an extension ring of $R$. 
    We establish that under certain conditions on $R$ these normalizers coincide.  
    Moreover, in the case of adjoint type groups, we show that they are precisely equal to $G_{\pi, \sigma}(\Phi, R)$.
\end{abstract}


\maketitle 

\pagenumbering{arabic}


\section{Introduction}

Let $\mathcal{L}$ be a (finite-dimensional) semisimple Lie algebra over $\mathbb{C}$ with root system $\Phi$, and let $\pi$ be a (finite-dimensional) representation of $\mathcal{L}$.  
Consider a commutative ring $R$ with unity, and let $G_{\pi} (\Phi, R)$ be the Chevalley group of type $\Phi$ over $R$.  
Let $\sigma$ be an automorphism of $G_{\pi} (\Phi, R)$, of order $2$, which is the product of a graph automorphism and a ring automorphism. 
The subgroup of \(G_{\pi} (\Phi, R)\) consisting of elements fixed under \(\sigma\) is known as the twisted Chevalley group of type \(\Phi\) over \(R\), denoted by \(G_{\pi, \sigma} (\Phi, R)\). Let $E'_{\pi, \sigma} (\Phi, R)$ be its elementary subgroup, known as a elementary twisted Chevalley of type $\Phi$ over a ring $R$.

The primary goal of this paper is to establish results that will be utilized in the author’s forthcoming paper on the automorphisms of twisted Chevalley groups over commutative rings. 
Specifically, we prove that under certain conditions on $R$, the normalizer of $G_{\pi, \sigma} (\Phi, R)$ and $E'_{\pi, \sigma} (\Phi, R)$ in the larger group $G_{\pi, \sigma} (\Phi, S)$ are coincides, where $S$ is a ring extension of $R$.  
Moreover, in the case of adjoint type groups, we show that these normalizers are precisely equal to \( G_{\pi, \sigma} (\Phi, R) \).  
The main theorem of this paper is stated below.  

\begin{thm}[Main Theorem]\label{thm:normalizer_of_G_and_E}
    Let \(R\) be a Noetherian commutative ring with unity.  
    Let \(G_{\pi, \sigma} (\Phi, R)\) be a twisted Chevalley group of type \({}^2 A_n \ (n \geq 3), {}^2 D_n \ (n\geq 4)\), or \({}^2 E_6\), and let \(E'_{\pi, \sigma} (\Phi, R)\) be its elementary subgroup.  
    Assume that \(1/2 \in R\) and, if \(\Phi_\rho \sim {}^2 A_{2n}\), additionally assume that \(1/3 \in R\).  
    Furthermore, suppose there exists an invertible element \(a \in R\) such that \(\theta(a) = - a\).  
    If \(S\) is a ring extension of \(R\), then we have  
    \[
        N_{G_{\pi,\sigma}(\Phi, S)}(G_{\pi, \sigma}(\Phi, R)) = N_{G_{\pi,\sigma} (\Phi, S)} (E'_{\pi, \sigma} (\Phi, R)).
    \]
    Moreover, if \(G\) is of adjoint type, we have  
    \[
        N_{G_{\text{ad},\sigma}(\Phi, S)}(G_{\text{ad}, \sigma}(\Phi, R)) = N_{G_{\text{ad},\sigma} (\Phi, S)} (E'_{\text{ad}, \sigma} (\Phi, R)) = G_{\text{ad}, \sigma} (\Phi, R).
    \]
\end{thm}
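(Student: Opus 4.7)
The proof naturally splits into two halves: the equality of the two normalizers, and the identification with $G_{\text{ad},\sigma}(\Phi,R)$ in the adjoint case. For the first equality, the inclusion $N_{G_{\pi,\sigma}(\Phi,S)}(G_{\pi,\sigma}(\Phi,R)) \subseteq N_{G_{\pi,\sigma}(\Phi,S)}(E'_{\pi,\sigma}(\Phi,R))$ should follow once one shows that $E'_{\pi,\sigma}(\Phi,R)$ is characteristic in $G_{\pi,\sigma}(\Phi,R)$. Under the standing hypotheses (in particular $1/2 \in R$, $1/3 \in R$ for ${}^2A_{2n}$, and the existence of $a$ with $\theta(a) = -a$), one expects $E'_{\pi,\sigma}(\Phi,R)$ to agree with the commutator subgroup of $G_{\pi,\sigma}(\Phi,R)$, or at least to be invariant under every abstract automorphism; either way, conjugation by any $g \in N(G)$ preserves $E'$.

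The reverse inclusion $N(E'_{\pi,\sigma}(\Phi,R)) \subseteq N(G_{\pi,\sigma}(\Phi,R))$ is the substantive direction. My plan would be: take $g$ in the normalizer of $E'$, and for an arbitrary $x \in G_{\pi,\sigma}(\Phi,R)$ analyze $gxg^{-1} \in G_{\pi,\sigma}(\Phi,S)$ via its action (by conjugation) on $E'_{\pi,\sigma}(\Phi,R)$. Since both $g$ and $x$ normalize $E'$, so does $gxg^{-1}$; one then needs to extract from this normalizing property the fact that the ``coordinates'' of $gxg^{-1}$ against the twisted root subgroups lie in $R$. The Noetherian hypothesis is there so that one may localize at each maximal ideal of $R$ and argue one prime at a time, reducing to a setting where the commutator calculus among the $\sigma$-root subgroups is controlled by the usual Chevalley commutator formulas (adapted to the twisted setting), and where the element $a$ with $\theta(a) = -a$ furnishes the splitting of $R$ into $+1$ and $-1$ eigenspaces of $\theta$ needed to read off individual coefficients.

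For the adjoint statement, one inclusion $G_{\text{ad},\sigma}(\Phi,R) \subseteq N_{G_{\text{ad},\sigma}(\Phi,S)}(E'_{\text{ad},\sigma}(\Phi,R))$ is tautological. For the opposite inclusion, given $g$ in the normalizer, the conjugation map $c_g$ defines an automorphism of $E'_{\text{ad},\sigma}(\Phi,R)$. Invoking the structure of such automorphisms (or, more concretely, using that $c_g$ permutes the system of $\sigma$-root subgroups up to diagonal conjugation), one finds $h \in G_{\text{ad},\sigma}(\Phi,R)$ whose conjugation on $E'$ matches $c_g$. Then $gh^{-1}$ centralizes $E'_{\text{ad},\sigma}(\Phi,R)$, and since $E'_{\text{ad},\sigma}(\Phi,S)$ has trivial centralizer in $G_{\text{ad},\sigma}(\Phi,S)$ in the adjoint case (again a consequence of $1/2, 1/3 \in R$ and the reductive-rank hypotheses $n \geq 3$, $n \geq 4$ ruling out the degenerate small ranks), one concludes $g = h \in G_{\text{ad},\sigma}(\Phi,R)$.

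The chief obstacle I anticipate is the entry-extraction step of the second paragraph: to convert a conjugation-normalizing datum into a statement about ring-theoretic entries requires separating the contributions of long and short roots, and in the ${}^2A_{2n}$ case contending with the quadratic-form structure attached to the short roots. This is almost certainly the reason for the stated exclusion of the low ranks ${}^2A_2$ and ${}^2D_3$, where the root system is too small to permit the usual commutator tricks, and for the extra assumption $1/3 \in R$ in the ${}^2A_{2n}$ case, which is precisely what is needed to invert the coefficient $3$ that appears in the quadratic commutator identity among the short root groups of ${}^2A_{2n}$.
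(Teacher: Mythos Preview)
Your architectural outline is sound---one inclusion via $E'$ being characteristic, the reverse via showing that normalizing $E'$ forces entries into $R$, and the adjoint case via matching conjugation with an inner automorphism and using triviality of the center---but your proposed mechanisms for the two substantive steps diverge from the paper's and, as written, carry real risks.

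For the inclusion $N(E'_{\pi,\sigma}(\Phi,R)) \subset N(G_{\pi,\sigma}(\Phi,R))$, the paper does \emph{not} do commutator calculus or localization. Its key lemma is that $E'_{\pi,\sigma}(\Phi,R)$ generates the full matrix ring $M_n(R)$ as an $R$-algebra. Once you have this, the argument is two lines: $g E' g^{-1} = E'$ implies $g M_n(R) g^{-1} = M_n(R)$, hence $g\,GL_n(R)\,g^{-1} = GL_n(R)$, and then $g G_{\pi,\sigma}(\Phi,R) g^{-1} \subset GL_n(R) \cap G_{\pi,\sigma}(\Phi,S) = G_{\pi,\sigma}(\Phi,R)$. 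Your entry-extraction-by-commutators plan might be made to work, but it is substantially harder and you have not indicated how to carry it out. Relatedly, you have misread the Noetherian hypothesis: the paper states explicitly that it is used \emph{only} to invoke the theorem that $E'$ is characteristic in $G$, not for any localization argument.

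For the adjoint case, the paper again avoids your route. Rather than appealing to a structure theorem for automorphisms of $E'_{\text{ad},\sigma}(\Phi,R)$---which would be dangerously close to circular, since this paper is laying groundwork for precisely such a theorem---it passes to the Lie algebra. The paper proves that the tangent space $T(E'_{\text{ad},\sigma}(\Phi,R))$ equals $\mathcal{L}_\sigma(\Phi,R)$; conjugation by $g$ therefore gives an $R$-automorphism of $\mathcal{L}_\sigma(\Phi,R)$, and after extending scalars, of the untwisted Chevalley algebra $\mathcal{L}(\Phi,R)$. Klyachko's classification of $R$-algebra automorphisms of $\mathcal{L}(\Phi,R)$ then writes $i_g = i_{g'} \circ \delta$ with $g' \in G_{\text{ad}}(\Phi,R)$ and $\delta$ a graph automorphism; one checks $\delta = \mathrm{id}$, and triviality of the center of the adjoint group forces $g = g'$. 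Your plan to ``invoke the structure of such automorphisms'' of $E'_{\text{ad},\sigma}$ directly is the step most in need of justification, and the paper's detour through the tangent algebra and the untwisted Lie algebra is exactly what supplies it.
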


The remainder of this paper is structured as follows.  
Section~\ref{sec:preliminaries} provides a review of essential background material and relevant known results on Chevalley groups and twisted Chevalley groups.  
In Section~\ref{sec:TA of ETCG}, we describe the tangent algebra of the elementary twisted Chevalley group of adjoint type.  
Section~\ref{sec:key lemma} presents a key lemma that serves as a crucial step in proving the main result.  
Finally, Section~\ref{sec:normalizer_of_G_and_E} contains the proof of the main theorem.  


\section{Preliminaries}\label{sec:preliminaries}


We assume the reader has prior knowledge of semisimple Lie algebras, root systems, and the Weyl group, including their fundamental properties. In this section, we formally define Chevalley and twisted Chevalley groups. For a more detailed discussion, we refer the reader to \cite{EA1}, \cite{EB24:final}, \cite{RC}, \cite{SG&DM1}, \cite{EP&NV}, \cite{RS}, \cite{RSTCG} or \cite{NV1}.


\subsection{Chevalley Algebras and Groups}\label{subsec:CG}

Let $\mathcal{L} = \mathcal{L}(\Phi, \mathbb{C})$ be a finite dimensional semisimple Lie algebra over $\mathbb{C}$ with root system $\Phi$. 
We fix a Cartan subalgebra $\mathcal{H}$ of $\mathcal{L}$ and consider the root space decomposition of $\mathcal{L}$:
\[
    \mathcal{L} = \mathcal{H} \oplus \Big( \coprod_{\alpha \in \Phi} \mathcal{L}_{\alpha} \Big),
\]
where $\mathcal{L}_\alpha = \{ X \in \mathcal{L} \mid [H,X] = \alpha (H) X, \ \forall H \in \mathcal{H} \}$ for each $\alpha \in \Phi$. 


\subsubsection{}\label{subsubsec:Chevalley algebra}

Fix a simple system \(\Delta\) of \(\Phi\). Consider a Chevalley basis of \(\mathcal{L}\) (for a precise definition, see page 147 of \cite{JH} or page 7 of \cite{RS}) given by  
\[
\{ H_i = H_{\alpha_i}, X_{\alpha} \mid \alpha_i \in \Delta, \, \alpha \in \Phi \}.
\]  
Define \(\mathcal{L}(\Phi, \mathbb{Z})\) as the \(\mathbb{Z}\)-span of this Chevalley basis.  

Now, let \(R\) be a commutative ring with unity. The \textbf{Chevalley algebra} of type $\Phi$ over a ring \(R\) is defined as  
\[
    \mathcal{L}(\Phi, R) = R \otimes_{\mathbb{Z}} \mathcal{L}(\Phi, \mathbb{Z}).
\]


\subsubsection{} 

Consider a (finite-dimensional) faithful representation $\pi: \mathcal{L} \longrightarrow \text{GL}(V)$. 
This induces a natural action of $\mathcal{U}$ and, consequently, of $\mathcal{U}_\mathbb{Z}$ on $V$. 
Note that $V$ contains a lattice $M$ that is invariant under the action of $\mathcal{U}_\mathbb{Z}$, called an \textbf{admissible lattice}. 
Let $\mathcal{L}_{\pi} (\mathbb{Z})$ denote the stabilizer of $M$ in $\mathcal{L}$.
It follows that $\mathcal{L}_{\pi} (\mathbb{Z})$ is a lattice in $\mathcal{L}$ and can be expressed as 
$$
\mathcal{L}_{\pi} (\mathbb{Z}) = \mathcal{H}_{\pi} (\mathbb{Z}) \oplus \coprod_{\alpha \in \Phi} \mathbb{Z} X_{\alpha},
$$
where 
$$
\mathcal{H}_{\pi} (\mathbb{Z}) = \mathcal{H} \cap \mathcal{L}_{\pi} (\mathbb{Z}) = \{ H \in \mathcal{H} \mid \mu(H) \in \mathbb{Z} \text{ for all weights } \mu \text{ of the representation } \pi \}.
$$
This shows that $\mathcal{L}_{\pi} (\mathbb{Z})$ depends only on the weight lattice $\Lambda_\pi$, making the notation independent of the particular choice of $M$.


\subsubsection{}

Let $R$ be a commutative ring with unity. 
Let $V(R) = M \otimes_\mathbb{Z} R$, $\mathcal{L} (R) = \mathcal{L} (\mathbb{Z}) \otimes_{\mathbb{Z}} R$ and $\mathcal{L}_{\pi} (R) = \mathcal{L}_{\pi} (\mathbb{Z}) \otimes_{\mathbb{Z}} R$.
Consider the automorphisms of $V(R)$ of the form $x_\alpha (t): = \exp {(t \pi (X_\alpha))} \ (t \in R, \alpha \in \Phi)$, where 
$$\exp {(t \pi(X_\alpha))} = \sum_{n=0}^{\infty} \frac{t^n \pi(X_\alpha)^n}{n!} .$$ 
The action of $x_\alpha (t)$ on $V(R)$ is the same as the action described in \cite[Chapter 3]{RS}. 
The subgroup of $\operatorname{Aut}(V(R))$ generated by all $x_\alpha(t) \ (t \in R, \alpha \in \Phi)$ is called an \textbf{elementary Chevalley group} and is denoted by $E_\pi (\Phi, R)$. 
For a representation $\pi$, let $\Lambda_\pi$ denote the weight lattice of $\pi$, i.e., the lattice generated by all weights of $\pi$. 
If $\pi$ and $\pi'$ are representations of $\mathcal{L}$ such that $\Lambda_\pi = \Lambda_{\pi'}$, then $E_\pi (\Phi, R) \cong E_{\pi'} (\Phi, R)$.
Let $\Lambda_r$ be the lattice generated by roots and $\Lambda_{sc}$ be the lattice generated by fundamental weights. 
If $\pi$ is such that $\Lambda_\pi = \Lambda_{r}$ (resp., $\Lambda_\pi = \Lambda_{sc}$), then $E_\pi(\Phi, R) = E_{\text{ad}}(\Phi, R)$ (resp., $E_\pi(\Phi, R) = E_{sc}(\Phi, R)$) is called an \textbf{adjoint elementary Chevalley group} (resp., \textbf{universal} (or \textbf{simply connected}) \textbf{elementary Chevalley group}). 

Let $U$ (resp., $U^{-}$) to be the subgroup of $E_\pi(\Phi, R)$ generated by all $x_\alpha(t), \alpha \in \Phi^+ \ (\text{resp., } \alpha \in \Phi^-), t \in R$. Let $H$ be the subgroup generated by all $h_\alpha (t) = w_\alpha (t) w_\alpha(1)^{-1},$ where $w_\alpha (t) = x_\alpha(t) x_{-\alpha}(-t^{-1}) x_\alpha(t), \ t \in R^*$ (the group of units in $R$). If $B$ is the subgroup generated by $U$ and $H$, then $U \cap H = 1$, $U$ is normal in $B$ and $B = UH$. Let $N$ be the subgroup generated by all $w_\alpha (t)$ and $W$ be the Weyl group $W(\Phi)$. Then $H$ is normal in $N$ and $W \cong N/H$ with the map $s_\alpha \mapsto H w_\alpha(1), \forall \alpha \in \Phi$. We sometimes use more precise notation such as $U_\pi (\Phi, R), U(\Phi, R)$ or $U(R)$ instead of just $U$. Similarly, this applies to $U^{-}, H, B$ and $N$.


\subsubsection{}

Let $k$ be an algebraically closed field. Then the semisimple linear algebraic groups over the field $k$ are precisely the elementary Chevalley groups $E_\pi (\Phi, k)$ (see \cite[Chapter 5]{RS}). All these groups can be viewed as subgroups of $GL_n(k)$ defined as a common set of zeros of polynomials of matrix entries $x_{ij}$ with integer coefficients. Note that the multiplication map and the inverse map are also defined by polynomials with integer coefficients. Therefore, these polynomials can be considered as polynomials over an arbitrary commutative ring with unity. 


\subsubsection{}

Let $E_\pi (\Phi, \mathbb{C})$ be an elementary Chevalley group viewed as a subgroup of $GL_n(\mathbb{C})$ defined by zero locus of polynomials $p_1(x_{ij}), \dots ,p_m(x_{ij})$. Note that these polynomials can be chosen to have integer coefficients. Let $R$ be a commutative ring with unity and let us consider the groups $$G(R) = \{ (a_{ij}) \in GL_n (R) \mid \widetilde{p}_1(a_{ij}) = 0, \dots, \widetilde{p}_m(a_{ij}) = 0\},$$
where $\widetilde{p}_1(x_{ij}), \dots , \widetilde{p}_m(x_{ij})$ are polynomials having the same coefficients as $p_1(x_{ij}), \dots ,p_m(x_{ij})$, but considered over a ring $R$. This group is called the \textbf{Chevalley group} $G_\pi (\Phi, R)$ of the type $\Phi$ over the ring $R$. If $\pi$ is a representation such that $\Lambda_{\pi} = \Lambda_r$ then $G_{\pi}(\Phi, R) = G_{ad}(\Phi, R)$ is called an \textbf{adjoint Chevalley group}.  If $\pi$ is a representation such that $\Lambda_\pi = \Lambda_{sc}$ then $G_\pi(\Phi, R) = G_{sc}(\Phi, R)$ is called a \textbf{universal} (or \textbf{simply connected}) \textbf{Chevalley group}.

Note that $E_\pi (\Phi, R) \subseteq G_\pi (\Phi, R)$. If $k$ is an algebraically closed field then $E_\pi (\Phi, k) = G_\pi (\Phi, k)$. But in general, equality may not hold (even for a field). 


\subsubsection{}

The subgroup of diagonal matrices (in the standard basis of weight vectors) of the Chevalley group $G_\pi (\Phi, R)$ is called the \textbf{standard maximal torus} of $G_\pi (\Phi, R)$ and it is denoted by $T_\pi (\Phi, R)$. 
This group is isomorphic to Hom$(\Lambda_\pi, R^*)$ where $R^*$ is the group of units in $R$ and the isomorphism is given as follows: Let $\chi \in $ Hom$(\Lambda_{\pi}, R^*)$ be a character of $\Lambda_\pi$. Let $V_\mu$ be a weight space corresponding to weight $\mu$ of $\pi$ and let $V_{\mu}(R) = (V_\mu \cap M) \otimes_\mathbb{Z} R$. 
Define an automorphism $h(\chi)$ of $V(R)$ given by $$ h(\chi) \cdot v = \chi (\mu) v, $$ where $\mu$ is a weight of $\pi$ and $v \in V_{\mu} (R)$. 
Note that, $h(\chi)$ can be extented to $V(R)$ as $V(R) = \coprod_{\mu \in \Omega_\pi} V_{\mu}(R),$ where $\Omega_\pi$ is the collection of weights corresponding to representation $\pi$. 
Therefore, $$T_\pi (\Phi, R) = \{ h(\chi) \mid \chi \in  \text{Hom}(\Lambda_\pi, R^*) \}.$$ 

Note that $H_\pi (\Phi, R)$ is contained in $T_{\pi}(\Phi, R)$. The element $h(\chi) \in H_\pi (\Phi, R) \subset E_\pi(\Phi, R)$ if and only if $\chi \in \text{Hom}(\Lambda_\pi, R^*)$ can be extented to a character $\chi'$ of $\Lambda_{sc}$, that is, $\chi' \in \text{Hom}(\Lambda_{sc}, R^*)$ such that $\chi'|_{\Lambda_{\pi}} = \chi$. Moreover, $h_\alpha(t) = h(\chi_{\alpha, t}) \ (t \in R^*, \alpha \in \Phi),$ where $$ \chi_{\alpha, t}: \lambda \mapsto t^{\langle \lambda, \alpha \rangle} \ (\lambda \in \Lambda_\pi).$$ Therefore $H_\pi (\Phi, R) = E_\pi (\Phi, R) \cap T_\pi(\Phi, R).$ Consider a subgroup $G_{\pi}^{0}(\Phi, R) = E_\pi (\Phi, R) T_\pi (\Phi, R)$ of $G_\pi(\Phi, R)$. If $R$ is a semilocal ring, then $G_\pi (\Phi, R) = G_\pi^0 (\Phi, R)$ (see \cite[Corollary 2.4]{EA2}). The element $h(\chi)$ acts on $\mathfrak{X}_{\alpha} = \{ x_{\alpha} (t) \mid t \in R \}$ by conjugation as follows: $$ h(\chi) x_\alpha (\zeta) h(\chi)^{-1} = x_\alpha (\chi (\alpha) \zeta).$$ 

\begin{rmk}
    For abusive use of notations, we sometimes write $E(R)$ or $E(\Phi, R)$ instead of $E_\pi(\Phi, R)$, similar for $G_\pi (\Phi, R), G^0_\pi (\Phi, R)$ and $T_\pi (\Phi, R)$.
\end{rmk}


\subsubsection{}

A subgroup $H$ of a group $G$ is called \textit{characteristic}, if it is mapped into itself under any automorphism of $G$. In particular, any characteristic subgroup is normal. If the rank of $\Phi$ is $\geq 1$, then $E_\pi (\Phi, R)$ is a characteristic subgroup of $G_\pi (\Phi, R)$ (see \cite[Theorem 5]{LV}).

A group $G$ is said to be \textit{perfect} if $[G,G] = G$, where $[G,G]$ denotes the commutator subgroup of $G$. If the rank of $\Phi$ is $\geq 1$, then the elementary Chevalley group $E_\pi (\Phi, R)$ is perfect, i.e., $[E_\pi (\Phi, R), E_\pi (\Phi, R)]$ (see \cite[Theorem 5]{LV}).


\subsection{Twisted Root System}\label{Subsec:TRS}

Let $V$ be a finite-dimensional real Euclidean vector space and let $\Phi$ be a crystallographic root system. Let $\Delta$ and $\Phi^+$ be the simple and positive root systems, respectively, with respect to some fixed ordering on $V$. Let $\rho$ be a non-trivial angle preserving permutation of $\Delta$ (such a $\rho$ exists only when $\Phi$ is of type $A_l \ (l \geq 1), D_l \ (l \geq 4), E_6, B_2, F_4$ or $G_2$). Note that the possible order of $\rho$ is either $2$ or $3$, with the latter possible only when $\Phi$ is of type $D_4$. We define an isometry $\hat{\rho} \in GL (V)$ as follows:
\begin{enumerate}
    \item If $\Phi$ has one root length, then define $\hat{\rho}(\alpha)= \rho(\alpha)$ for each $\alpha \in \Delta$.
    \item If $\Phi$ has two root lengths. Then define $\hat{\rho}(\alpha)= \rho(\alpha)/ \sqrt{p}$ for each short root $\alpha \in \Delta$ and $\hat{\rho}(\alpha)= \sqrt{p} \hspace{1mm} \rho(\alpha)$ for each long root $\alpha \in \Delta$, where $p = ||\alpha||^2 / ||\beta||^2$, $\alpha$ is a long root and $\beta$ is a short root. 
\end{enumerate} 

Clearly, the order of $\hat{\rho}$ is the same as that of $\rho$ and $\hat{\rho}$ preserves the sign. Note that $\hat{\rho} w_{\alpha} \hat{\rho}^{-1} = w_{\rho(\alpha)}$, hence $\hat{\rho}$ normalizes $W$. Define $V_\rho = \{ v \in V \mid \hat{\rho}(v)=v \}$ and $W_\rho = \{ w \in W \mid \hat{\rho}w\hat{\rho}^{-1} =w \}$. Let $\hat{\alpha} = 1/o(\rho) \sum_{i=0}^{o(\rho)-1} \hat{\rho}^i (\alpha),$ the average of the elements in the $\hat{\rho}$-orbit of $\alpha$. Then $(\beta, \hat{\alpha})=(\beta, \alpha)$ for all $\beta \in V_\rho$. Hence the projection of $\alpha$ on $V_\rho$ is $\hat{\alpha}$. 

Note that $W_\rho$ acts faithfully on $V_\rho$. Let $J = J_\alpha \subset \Phi$ be the $\rho$-orbit of $\alpha$ and let $W_{J}$ be the group generated by all $w_\beta \ (\beta \in J_\alpha)$. Let $w_{J}$ be the unique element of $W_J$ such that $w_J (P_\alpha) = - P_\alpha$, where $P_\alpha$ is a positive system generated by $J_\alpha$ (such a $w_J$ exists and is of highest length element in $W_J$). Then $w_J|_{V_\rho} = w_{\hat{\alpha}}|_{V_\rho}$ and $w_J|_{V_\rho} \in W_{\rho}$. In fact, $\{ w_{\hat{\alpha}}|_{V_\rho} \mid \alpha \in \Delta \}$ forms a generating set of $W_\rho$. Therefore the group $W_\rho|_{V_\rho}$ is a reflection group. Define $\Tilde{\Phi}_\rho = \{ \hat{\alpha} \mid \alpha \in \Phi \}$ and $\Tilde{\Delta}_\rho = \{ \hat{\alpha} \mid \alpha \in \Delta \}$. Then $\Tilde{\Phi}_\rho$ is the (possibly non-reduced) root system corresponding to the Weyl group $W_\rho|_{V_\rho}$ and $\Tilde{\Delta}_\rho$ is the corresponding simple system. In order to make $\Tilde{\Phi}_\rho$ reduced, we can stick to the set of shortest projections of various directions, and denote it by $\Phi_\rho$. Define an equivalence relation $R$ on $\Phi$ by $\alpha \equiv \beta$ iff $\hat{\alpha}$ is a positive multiple of $\hat{\beta}$. If $\Phi/R$ denotes the collection of all equivalence classes of this relation, then $\Phi_\rho$ is in one-to-one correspondence with $\Phi / R$ by identifying a root $\hat{\alpha}$ of $\Phi_\rho$ with a class $[\alpha]$ of $\Phi / R$. Similarly, there exists a one-to-one correspondence between $\Tilde{\Phi}_\rho$ and $\{ J_\alpha \mid \alpha \in \Phi \}$ by sending a root $\hat{\alpha}$ of $\Tilde{\Phi}_\rho$ to $J_\alpha$. Clearly $-[\alpha] = [-\alpha]$ and $-J_{\alpha} = J_{-\alpha}.$

\begin{lemma}[{\cite[page 103]{RS}}]
    \normalfont
    If $\Phi$ is irreducible, then an element of $\Phi/R$ is the positive system of roots of a system of one of the following types:
    \begin{enumerate}[(a)]
        \item $A_1^n, \hspace{1mm} n=1, 2$ or $3$.
        \item $A_2$ (this occurs only if $\Phi$ is of type $A_{2n}$).
        \item $C_2$ (this occurs if $\Phi$ is of type $C_{2}$ or $F_4$).
        \item $G_2$ (this occurs only if $\Phi$ is of type $G_2$).
    \end{enumerate}
\end{lemma}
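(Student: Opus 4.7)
The plan is to proceed by a case analysis on the type of the irreducible root system $\Phi$ admitting a nontrivial angle-preserving permutation $\rho$ of $\Delta$. By inspection of connected Dynkin diagrams, such $\Phi$ is one of $A_n\ (n \geq 2)$, $D_n\ (n \geq 4)$, $E_6$, $B_2$, $F_4$, or $G_2$. As already recorded in the text, $o(\rho) \in \{2,3\}$ and the value $3$ occurs only in type $D_4$. Consequently, every orbit $J_\alpha \subset \Phi$ has cardinality $1$, $2$, or $3$, and the positive system $P_\alpha$ is controlled by the pairwise angles and length ratios of the elements of $J_\alpha$. A singleton orbit trivially yields $A_1$, so the content of the lemma is the classification of orbits of size $2$ and $3$.

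For orbits of size $2$ in the simply-laced cases ($A_n$, $D_n$ with $n \geq 5$, $E_6$), the involution $\rho$ extended to $\Phi$ preserves lengths, so the two roots $\alpha, \rho(\alpha)$ in an orbit are of equal length and subtend an angle in $\{\pi/2, 2\pi/3\}$. If orthogonal, $P_\alpha \cong A_1^2$. If at angle $2\pi/3$, their sum lies in $\Phi$ and $P_\alpha \cong A_2$; I would argue that the latter possibility is possible exactly when $\Phi = A_{2n}$ by examining the Dynkin diagram: a non-orthogonal swapped pair forces two adjacent simple roots to be exchanged, which happens only at the central edge of $A_{2n}$. For size-$3$ orbits, $\Phi = D_4$ and $\rho$ is triality; a direct verification on $D_4^+$ shows that the three roots of any such orbit are mutually orthogonal (the three end simple roots of the $D_4$ diagram are pairwise orthogonal, and this property propagates through triality-orbits of all positive roots), giving $P_\alpha \cong A_1^3$.

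For the multiply-laced cases I would handle each type separately. In $B_2$ and $G_2$, $\rho$ swaps the two simple roots, which subtend angles $3\pi/4$ and $5\pi/6$ respectively; the positive system generated is visibly all of $C_2$ and all of $G_2$. In $F_4$, $\rho$ interchanges long with short roots via the usual symmetry of the $F_4$ diagram; I would list representatives of the $\rho$-orbits on $\Phi^+$ and check that each generated subsystem is one of $A_1$, $A_1^2$, or $C_2$. The key input for $F_4$ is that any orbit pair $\{\alpha, \rho(\alpha)\}$ of mixed length either spans an orthogonal pair (giving $A_1^2$) or lies at the angle forcing a $C_2$ subsystem, with no larger possibility (such as $G_2$) surviving because the corresponding root sums involve both long and short roots in the proportions allowed by $F_4$.

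The main obstacle, I expect, is the $F_4$ enumeration in the multiply-laced case: ruling out the appearance of $A_2$ (which is excluded by the absence of a pair of equal-length roots swapped by $\rho$ at angle $2\pi/3$) and ruling out $G_2$ (which does not embed in $F_4$ as a $\rho$-stable subsystem). Once these two exclusions are made by inspection of the explicit $F_4$ root list, the remaining steps (i)--(iii) above are essentially combinatorial and straightforward from the shape of the Dynkin diagram and the definition of $\rho$.
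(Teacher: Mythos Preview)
The paper does not supply its own proof of this lemma: it is stated with the citation \cite[page 103]{RS} and used as a black box, so there is no argument in the paper to compare your proposal against. Your case-by-case inspection of the Dynkin types admitting a nontrivial diagram symmetry is exactly the standard route (and is how Steinberg handles it in the cited lecture notes), so substantively you are not doing anything different from what the paper implicitly defers to.

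One point worth tightening in your write-up: the objects being classified are the equivalence classes $[\alpha]\in\Phi/R$ (fibres of $\alpha\mapsto$ direction of $\hat\alpha$), not the $\rho$-orbits $J_\alpha$ themselves. In the simply-laced order-$2$ cases these coincide except precisely when the orbit $\{\alpha,\bar\alpha\}$ spans an $A_2$: then the fixed root $\alpha+\bar\alpha$ has $\widehat{\alpha+\bar\alpha}=2\hat\alpha$, so it falls into the same class, and $[\alpha]=\{\alpha,\bar\alpha,\alpha+\bar\alpha\}$ is the full $A_2^+$. Your argument implicitly uses this when you say ``their sum lies in $\Phi$ and $P_\alpha\cong A_2$'', but you should make explicit that the class, not just the orbit, is what realises $A_2^+$. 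The analogous merging of orbits into a single class is also what produces the $C_2$ and $G_2$ answers in the multiply-laced types; phrasing the argument in terms of the positive system $P_\alpha$ generated by $J_\alpha$ (as the paper does) and then checking $P_\alpha=[\alpha]$ handles this uniformly.
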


If a class $[\alpha]$ in $\Phi/R$ is the positive system of roots of a system of type $X$ (where $X$ is any of the above root systems) then we write $[\alpha] \sim X$. Similarly, if $\Phi \sim X$ then we write $\Phi_\rho \sim {}^n X$ where $n$ is the order of $\rho$. In the following table we describe some root systems $\Phi_\rho$ and $\Tilde{\Phi}_\rho$ after the twist:

\begin{center}
    \begin{tabular}{|c|c|c|c|c|}
        \hline
        \multirow{2}{*}{\textbf{Type}} & \multirow{2}{*}{\textbf{$\Tilde{\Phi}_\rho$}} & \multirow{2}{*}{\textbf{$\Phi_\rho$}} & \multicolumn{2}{c|}{\textbf{Type of Roots}} \\
        \cline{4-5}
        & & & \textbf{Long} & \textbf{Short}  \\
        \hline 
        ${}^2 A_{2n-1} \ (n \geq 2)$ & $C_n$ & $C_n$ & $A_1$ & $A_1^2$ \\
        \hline 
        ${}^2 A_{2n} \ (n \geq 2)$ & $BC_n$ & $B_n$ & $A_1^2$ & $A_2$ \\
        \hline 
        ${}^2 D_{n} \ (n \geq 4)$ & $B_{n-1}$ & $B_{n-1}$ & $A_1$ & $A_1^2$ \\
        \hline
        ${}^3 D_{4}$ & $G_2$ & $G_2$ & $A_1$ & $A_1^3$ \\
        \hline
        ${}^2 E_{6}$ & $F_4$ & $F_4$ & $A_1$ & $A_1^2$ \\
        \hline
    \end{tabular}
\end{center}


\subsubsection{}

Finally, let us discuss the action of $\rho$ on the weight lattice $\Lambda_{sc}$. Assume that $\Phi$ has one root length. Since $\rho$ permutes simple roots (hence all roots), the action of $\rho$ on root lattice $\Lambda_r$ is clear. The fundamental dominant weights $\lambda_1, \dots, \lambda_l$ forms a $\mathbb{Z}-$basis of the weight lattice $\Lambda_{sc}$. We can define the action of $\rho$ on $\lambda_i$ by $\rho (\lambda_i) = \lambda_j$ if $\rho (\alpha_i) = \alpha_j$. This action can be naturally extended to a $\mathbb{Z}-$linear automorphism $\rho$ of $\Lambda_{sc}$ such that $\rho (\Lambda_{r}) = \Lambda_r$. Thus $\rho$ can be thought as a group automorphism of the fundamental group $\Lambda_{sc}/\Lambda_r$ of $\Phi$. Now let $\Lambda$ be a sublattice of $\Lambda_{sc}$ which contains $\Lambda_r$. Then $\Lambda / \Lambda_r$ is a subgroup of $\Lambda_{sc} / \Lambda_r$ which is cyclic except for the case of $\Phi = D_{2n}$. Therefore $\rho(\Lambda / \Lambda_r) = \Lambda / \Lambda_r$ and hence $\rho (\Lambda) = \Lambda$. For the case of $\Phi = D_{2n}$, the fundamental group $\Lambda_{sc} / \Lambda_r$ is isomorphic to $\mathbb{Z}_2 \times \mathbb{Z}_2$. Hence there are exactly two proper sublattices $\Lambda_1$ and $\Lambda_2$ of $\Lambda_{sc}$ which contains $\Lambda_r$ as proper sublattice with the property that $\rho (\Lambda_i) \not\subset \Lambda_i$ for $i=1,2$. Therefore, if $\Lambda_\pi = \Lambda_1$ or $\Lambda_2$, then the graph automorphism of $G_\pi (\Phi, R)$ and $E_\pi (\Phi, R)$ do not exist even when $1/2 \in R$ (see \cite[page 91]{RS}).


\subsection{Twisted Chevalley Algebra}\label{subsec:TCA}

Let $\mathcal{L} = \mathcal{L}(\Phi, \mathbb{C})$ be a simple Lie algebra over $\mathbb{C}$ with root system $\Phi$. 
Assume that $\Phi$ is of type $A_n \ (n \geq 2)$, $D_n \ (n \geq 4)$, or $E_6$. 
Let $\Delta = \{ \alpha_1, \dots, \alpha_n \}$ be a fixed simple system.
Consider the Chevalley basis $\{X_\alpha, H_i \mid \alpha \in \Phi, \ i=1, \dots, n\}$. 
Let $\rho$ be a non-trivial angle-preserving permutation of $\Phi$, and write $\bar{\alpha} = \rho(\alpha)$ for all $\alpha \in \Phi$. 
This induces an automorphism of $\mathcal{L}$, also denoted by $\rho$, satisfying 
\[
    \rho (H_\alpha) = H_{\bar{\alpha}}, \quad 
    \rho (X_\alpha) = X_{\bar{\alpha}}, \quad 
    \rho (X_{-\alpha}) = X_{-\bar{\alpha}}, 
\]
for all $\alpha \in \Delta$. 
Then we have $\rho (X_\alpha) = \epsilon_\alpha X_{\bar{\alpha}}$, where $\epsilon_\alpha = \pm 1$ for all $\alpha \in \Phi$. 

\begin{lemma}[{\cite[Proposition 3.1]{EA1}}]\label{epsilonalpha}
    \normalfont
    A Chevalley basis of $\mathcal{L}$ can be chosen such that:
    \begin{enumerate}[(a)]
        \item $\epsilon_\alpha = \epsilon_{\bar{\alpha}}$, 
        \item $\epsilon_\alpha = -1$ if $[\alpha] \sim A_2$ and $\alpha = \bar{\alpha}$, 
        \item $\epsilon_\alpha = 1$ otherwise.
    \end{enumerate}
\end{lemma}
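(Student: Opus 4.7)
The plan is to start from an arbitrary Chevalley basis and adjust the signs of the $X_\alpha$'s until the three conditions hold. A Chevalley basis is unique up to simultaneous replacements $X_\alpha \mapsto \eta_\alpha X_\alpha,\; X_{-\alpha} \mapsto \eta_\alpha X_{-\alpha}$ with $\eta_\alpha \in \{\pm 1\}$ (the equality $\eta_{-\alpha} = \eta_\alpha$ being forced by $[X_\alpha, X_{-\alpha}] = H_\alpha$). Under such a rescaling the signs transform as $\epsilon'_\alpha = \eta_\alpha\,\epsilon_\alpha\,\eta_{\bar\alpha}^{-1}$. Consequently $\epsilon_\alpha$ is an intrinsic invariant of $\mathcal{L}$ whenever $\bar\alpha = \alpha$, while on each non-fixed $\rho$-orbit $\{\alpha, \bar\alpha\}$ the two signs are free parameters.

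Property (a) follows immediately from $\rho^2 = 1$ (the types $A_n, D_n, E_6$ admit involutive angle-preserving permutations under the hypotheses in force here): applying $\rho$ twice gives $X_\alpha = \epsilon_\alpha \epsilon_{\bar\alpha} X_\alpha$, hence $\epsilon_\alpha = \epsilon_{\bar\alpha}$ for any Chevalley basis. For property (b), suppose $[\alpha] \sim A_2$ and $\alpha = \bar\alpha$. By the classification of $A_2$-classes recalled in Section~\ref{Subsec:TRS}, the class is $\{\beta, \bar\beta, \beta + \bar\beta\}$ with $\alpha = \beta + \bar\beta$ for some non-fixed $\beta$. Writing $X_\alpha = N_{\beta,\bar\beta}^{-1}[X_\beta, X_{\bar\beta}]$ and applying $\rho$,
\[
\rho(X_\alpha) \;=\; N_{\beta,\bar\beta}^{-1}\,\epsilon_\beta \epsilon_{\bar\beta}\,[X_{\bar\beta}, X_\beta] \;=\; -\epsilon_\beta \epsilon_{\bar\beta}\,X_\alpha
\]
by antisymmetry of the Lie bracket, so $\epsilon_\alpha = -\epsilon_\beta \epsilon_{\bar\beta} = -1$ using (a). Since $\alpha$ is fixed, this value is untouched by any further rescaling.

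For property (c), simple roots already satisfy $\epsilon_{\alpha_i} = 1$ by the defining relations for $\rho$. For a non-fixed root $\alpha$, the transformation law above lets me take $\eta_\alpha = \epsilon_\alpha$ and $\eta_{\bar\alpha} = 1$ (with the same choice propagated to $-\alpha, -\bar\alpha$) to force $\epsilon'_\alpha = \epsilon'_{\bar\alpha} = 1$; different non-fixed orbits can be treated independently since their $\eta$'s do not interact. The \emph{main obstacle} is a fixed non-simple root $\alpha$ with $[\alpha] \sim A_1$, for instance $\alpha_{n-1} + \alpha_n + \alpha_{n+1}$ in ${}^{2}A_{2n-1}$ and the analogous roots in ${}^{2}D_n$ and ${}^{2}E_6$, where rescaling is powerless and the value $\epsilon_\alpha = +1$ has to be established intrinsically. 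My plan there is to write $X_\alpha$ as an iterated bracket of simple root vectors $X_{\alpha_i}$, apply $\rho$ termwise using $\rho(X_{\alpha_i}) = X_{\bar\alpha_i}$, and then use the Jacobi identity together with the vanishing of $[X_{\alpha_i}, X_{\alpha_j}]$ for non-adjacent simple roots to restructure the result back to a scalar multiple of $X_\alpha$. The hypothesis $[\alpha] \sim A_1$ (no root of $\Phi$ outside the singleton orbit has projection a positive multiple of $\hat\alpha$) ensures that every antisymmetry-induced sign change is paired with another that cancels it, leaving $\epsilon_\alpha = +1$. This last verification reduces to a finite, type-by-type computation and is the most technical part of the argument, but it is conceptually routine once the Jacobi-identity template is in place.
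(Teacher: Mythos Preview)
The paper does not prove this lemma; it simply cites Proposition~3.1 of Abe's paper \cite{EA1} and moves on. So there is no in-paper proof to compare against, and your attempt should be judged on its own merits.

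Your arguments for (a) and (b) are correct. Applying $\rho$ twice gives $\epsilon_\alpha\epsilon_{\bar\alpha}=1$, and the bracket computation for $\alpha=\beta+\bar\beta$ together with $N_{\bar\beta,\beta}=-N_{\beta,\bar\beta}$ yields $\epsilon_\alpha=-\epsilon_\beta\epsilon_{\bar\beta}=-1$; since $\alpha$ is $\rho$-fixed this value is indeed invariant under admissible sign changes. Your treatment of the non-fixed orbits in (c) is also sound: the transformation law $\epsilon'_\alpha=\eta_\alpha\epsilon_\alpha\eta_{\bar\alpha}$ allows each such orbit to be normalised to $+1$ independently, and the constraint $\eta_{-\alpha}=\eta_\alpha$ causes no conflict because $\epsilon_{-\alpha}=\epsilon_\alpha$ automatically (apply $\rho$ to $[X_\alpha,X_{-\alpha}]=H_\alpha$).

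The genuine gap is exactly where you locate it: the $\rho$-fixed, non-simple roots with $[\alpha]\sim A_1$. You have correctly observed that for such $\alpha$ the sign $\epsilon_\alpha$ is an invariant of the pair $(\mathcal{L},\rho)$, so no rescaling can help and you must \emph{prove} $\epsilon_\alpha=+1$. Your proposed method---express $X_\alpha$ as an iterated bracket of simple root vectors and track what $\rho$ does---is the standard one, but the sentence ``every antisymmetry-induced sign change is paired with another that cancels it'' is not yet an argument; in particular the hypothesis $[\alpha]\sim A_1$ does not by itself organise the Jacobi rearrangements into cancelling pairs. In practice Abe (and Steinberg before him) carries this out by an explicit check in each of ${}^2A_{2n-1}$, ${}^2D_n$, ${}^2E_6$, using a concrete description of the fixed roots and a specific sign convention for the structure constants. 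Until you either perform that verification or supply a uniform argument, the proof of (c) is incomplete.
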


Let $R$ be a commutative ring with unity, and let $\theta: R \to R$ be an automorphism of $R$ of order $2$. 
Write $\bar{r} = \theta(r)$ for all $r \in R$. 
Define $R_\theta = \{r \in R \mid r = \bar{r}\}$ and $R_\theta^{-} = \{r \in R \mid r = -\bar{r}\}$. 
If $1/2 \in R$, then $R = R_\theta \oplus R_\theta^{-}$.

Let $\mathcal{L}(\Phi, R)$ denote a Chevalley algebra over $R$ of type $\Phi$ (cf. \ref{subsubsec:Chevalley algebra}). 
The automorphism $\theta$ induces a semi-automorphism of $\mathcal{L}(\Phi, R)$, also denoted by $\theta$, satisfying 
\[
    \theta (r H_i) = \bar{r} H_i \quad (i=1, \dots, n), \qquad 
    \theta (r X_\alpha) = \bar{r} X_\alpha \quad (\alpha \in \Phi), 
\]
for all $r \in R$.

Define $\sigma = \rho \circ \theta$. 
Then $\sigma$ is a semi-automorphism of $\mathcal{L}(\Phi, R)$ satisfying:
\[
    \sigma (r H_{\alpha_i}) = \bar{r} H_{\bar{\alpha_i}} \quad (i = 1, \dots, n), \qquad 
    \sigma (r X_\alpha) = \epsilon_\alpha \bar{r} X_{\bar{\alpha}} \quad (\alpha \in \Phi),
\]
for all $r \in R$. 
Define 
\[
    \mathcal{L}_{\sigma}(\Phi, R) = \{ X \in \mathcal{L}(\Phi, R) \mid \sigma(X) = X \}.
\]
Then $\mathcal{L}_\sigma(\Phi, R)$ forms a submodule of an $R_\theta$-module $\mathcal{L}(\Phi, R)$. 
This submodule is called the \textbf{twisted Chevalley algebra} over $R$ of type $\Phi$.


\subsubsection{\textbf{Basis of \texorpdfstring{$\mathcal{L}_\sigma (\Phi, R)$}{L(R)}}}

Suppose there exists an invertible element \( a \in R_\theta^{-} \). Furthermore, assume that \( 2 \) is invertible in \( R \). Let 
\[
\{ X_\alpha, H_i \mid \alpha \in \Phi, \ i=1, \dots, n \}
\]
denote the Chevalley basis as described in Lemma~\ref{epsilonalpha}.

Let \( \rho \) be a non-trivial angle-preserving permutation of \( \Phi \) of order \( 2 \), and let \( \Phi_\rho \) be the corresponding twisted Chevalley root system. Define the following elements:

\begin{align*}
    X^{+}_{[\alpha]} &= 
    \begin{cases}
        X_{\alpha}, & \text{if } [\alpha] \sim A_1, \\
        X_{\alpha} + X_{\bar{\alpha}}, & \text{if } [\alpha] \sim A_1^2 \text{ or } A_2;
    \end{cases} \\
    X^{-}_{[\alpha]}(\mathrm{I}) &= a (X_{\alpha} - X_{\bar{\alpha}}) \quad \text{if } [\alpha] \sim A_1^2 \text{ or } A_2; \\
    X^{-}_{[\alpha]}(\mathrm{II}) &= a X_{\alpha + \bar{\alpha}} \quad \text{if } [\alpha] \sim A_2; \\
    H^{+}_{[\alpha]} &= 
    \begin{cases}
        H_{\alpha}, & \text{if } [\alpha] \sim A_1, \\
        H_{\alpha} + H_{\bar{\alpha}}, & \text{if } [\alpha] \sim A_1^2 \text{ or } A_2;
    \end{cases} \\
    H^{-}_{[\alpha]} &= a (H_{\alpha} - H_{\bar{\alpha}}) \quad \text{if } [\alpha] \sim A_1^2 \text{ or } A_2.
\end{align*}

Then, the set
\[
    \{ X^{+}_{[\alpha]}, X^{-}_{[\alpha]}(\mathrm{I}), X^{-}_{[\alpha]}(\mathrm{II}), H^{+}_{[\alpha_i]}, H^{-}_{[\alpha_i]} \mid [\alpha] \in \Phi_\rho, \ [\alpha_i] \in \Delta_\rho \}
\]
forms a basis for the \( R_\theta \)-module \( \mathcal{L}_\sigma(\Phi, R) \). 
Finally, note that 
\[
    \mathcal{L}(\Phi, R) = R \otimes_{R_\theta} \mathcal{L}_\sigma(\Phi, R).
\]
Therefore, the same set also forms a basis for the \( R \)-algebra \( \mathcal{L}(\Phi, R) \).


\subsection{Twisted Chevalley Groups}\label{Subsec:TCG}

Assume that $\Phi$ is of type $A_n (n \geq 2), D_n (n \geq 4)$ or $E_6$ and let $G(R) = G_\pi (\Phi, R)$ (resp., $E(R) = E_\pi (\Phi, R)$) be a Chevalley group (resp., an elementary Chevalley group) over a commutative ring $R$. Let $\sigma$ be an automorphism of $G(R)$ which is the product of a graph automorphism $\rho$ and a ring automorphism $\theta$ such that $o(\theta) = o(\rho)$. Denote the corresponding permutation of the roots also by $\rho$. Since $\rho \circ \theta = \theta \circ \rho$, we have $o(\theta) = o(\rho) = o(\sigma)$. Since $E(R)$ is a characteristic subgroup of $G(R)$, $\sigma$ is also an automorphism of $E(R)$. 

Define $G_\sigma (R) = \{ g \in G(R) \mid \sigma(g)=g \}$. Clearly, $G_\sigma (R)$ is a subgroup of $G(R)$. We call $G_\sigma (R)$ the \textbf{twisted Chevalley group} over the ring $R$. The notion of the adjoint twisted Chevalley group and the universal (or simply connected) twisted Chevalley group is clear.

Write $E_\sigma (R) = E(R) \cap G_\sigma (R)$. Consider the subgroups $U, H, B, U^-$ and $N$ of $E(R)$. Then $\sigma$ preserves $U, H, B, U^-$ and $N$. Hence we can make sense of $U_\sigma, H_\sigma, B_\sigma, U^-_\sigma$ and $N_\sigma$ (if $A \subset G(R)$ then we define $A_\sigma = A \cap G_\sigma(R)$). Note that $\sigma$ preserves $N/H \cong W$ (as it preserves $N$ and $H$). The action thus induced on $W$ is concordant with the permutation $\rho$ of the roots. Finally, let us define $E_\sigma' (R) = \langle U_\sigma, U_\sigma^- \rangle$, a subgroup of $E_\sigma (R)$ generated by $U_\sigma$ and $U_\sigma^-$. We call $E'_\sigma (R)$ the \textbf{elementary twisted Chevalley group} over the ring $R$. Write $H'_\sigma = H \cap E'_\sigma(R), N'_\sigma = N \cap E'_\sigma(R)$ and $B'_\sigma = B \cap E'_\sigma (R)$. Then $B'_\sigma = U_\sigma H'_\sigma$. 

Let $T(R) = T_\pi (\Phi, R)$ be the standard maximal torus of $G(R)$. Define $T_\sigma (R) = T(R) \cap G_\sigma (R)$ and called it the \textbf{standard maximal torus} of $G_\sigma (R).$ 
For a character $\chi \in $ Hom$(\Lambda_\pi, R^*)$, we define its conjugation $\bar{\chi}_\sigma$ with respect to $\sigma$ by $\bar{\chi}_\sigma (\lambda) = \theta (\chi (\rho^{-1}(\lambda)))$ for every $\lambda \in \Lambda_\pi$. 
If $h(\chi) \in T(R)$, then $\sigma (h(\chi)) = h(\bar{\chi}_\sigma)$.
A character $\chi \in $ Hom$(\Lambda_\pi, R^*)$ is called \textbf{self-conjugate with respect to $\sigma$} if $\chi = \bar{\chi}_\sigma$, i.e., $\chi (\rho(\lambda)) = \theta (\chi (\lambda)),$ for every $\lambda \in \Lambda_\pi$. We denote the set of such characters by $\text{Hom}_1 (\Lambda_\pi, R^*) = \{ \chi \in \text{Hom} (\Lambda_\pi, R^*) \mid \chi = \bar{\chi}_\sigma \}$.
Thus we have $T_\sigma (R) = \{ h(\chi) \mid \chi \in \text{Hom}_1 (\Lambda_\pi, R^*) \}$. Note that, an element $h(\chi) \in H_\sigma \subset T_\sigma(R)$ if and only if $\chi$ is a self-conjugate character of $\Lambda_\pi$ (with respect to $\sigma$) that can be extended to a self-conjugate character of $\Lambda_{sc}$.

For the sake of completeness, let us also define $G_\sigma^0 (R) = G^0_\pi(\Phi, R) \cap G_\sigma (R)$ and $G'_\sigma (R) = T_\sigma (R) E'_\sigma (R)$. 

If $G(R)$ is of type $X$ and $\sigma$ is of order $n$, we say $G_\sigma(R)$ is of type $^nX.$ We write $G(R) \sim X$ and $G_\sigma(R) \sim {}^nX$. We use a similar notation for $E(R), E_\sigma(R)$ and $E'_\sigma(R)$. 

\begin{rmk}
    Sometimes we use more detailed notations such as $G_{\pi, \sigma} (\Phi, R)$ or $G_{\sigma} (\Phi, R)$ to refer to the group $G_\sigma(R)$. This convention similarly applies to other groups described above.
\end{rmk}


\subsection{List of Known Results}

We now present a collection of established results that will be utilized in the proof of our main theorem.


\subsubsection{\textbf{Automorphisms of Chevalley Algebras}}

We begin by presenting a result from \cite{AK} that investigate the $R$-algebra automorphisms of the Chevalley algebra $\mathcal{L}(\Phi, R)$. 

\vspace{2mm}

\noindent \textbf{Inner Automorphisms:} Let $G_{\text{ad}}(\Phi, R)$ be the elementary Chevalley group of type $\Phi$. By definition, each element of $G_{\text{ad}}(\Phi, R)$ is regarded as an automorphism of the Chevalley algebra $\mathcal{L}(\Phi, R)$. Such automorphisms are called \emph{inner automorphisms}.

\vspace{2mm}

\noindent \textbf{Graph automorphism:}
Let $\Phi$ be an irreducible root system and let $\Delta$ be a fixed simple system of $\Phi$. 
Consider a non-trivial angle-preserving permutation $\rho$ of the simple roots; such a $\rho$ induces an automorphism of the Coxeter graph (cf. Section~\ref{Subsec:TRS}). 
This automorphism naturally induces an automorphism of the Lie algebra $\mathcal{L}(\Phi, R)$, which is also denoted by $\rho$ (see \cite{AO&EV}). 

Let $\rho_1, \dots, \rho_k$ be all distinct automorphisms of $\mathcal{L}(\Phi, R)$ induced by the different symmetries of the given simple system $\Delta$ of $\Phi$ and let $R = R_1 \oplus \cdots \oplus R_k$ be a decomposition of the ring $R$ into a direct sum of ideals. Define an automorphism $\rho$ of the algebra 
$$\mathcal{L}(\Phi, R) = \mathcal{L}(\Phi, R_1) \oplus \cdots \oplus \mathcal{L}(\Phi, R_k)$$ 
by the rule 
\[
\rho(x_1 + \cdots + x_k) = \rho_1(x_1) + \cdots + \rho_k(x_k),
\]
where $x_i \in \mathcal{L}(\Phi, R_i)$. Such an automorphism $\rho$ is called a \emph{graph automorphism} of the algebra $\mathcal{L}(\Phi, R)$.
Clearly, the set of all graph automorphisms forms a group that is isomorphic to the group
\[
\mathcal{D}(\Phi, R) = \left\{ \sum e_i \rho_i \,\middle|\, e_i \in R, \, e_i^2 = e_i, \, e_i e_j = 0 \text{ for } i \neq j, \, \sum e_i = 1 \right\}.
\]

\begin{thm}[{\cite[Theorem 1]{AK}}]\label{thm:AK}
    \normalfont
    Let $R$ be a commutative ring with unity and $\Phi$ be an irreducible root system. Then every $R$-algebra automorphism of the Chevalley algebra $\mathcal{L}(\Phi, R)$ can be uniquely expressed as a composition of an inner automorphism and a graph automorphism. In particular,
    \[
        \text{Aut}_R(\mathcal{L}(\Phi, R)) \cong G_{\text{ad}}(\Phi, R) \rtimes \mathcal{D}(\Phi, R),
    \]
    where $\text{Aut}_R(\mathcal{L}(\Phi, R))$ denote the group of all $R$-algebra automorphisms of the Chevalley algebra $\mathcal{L}(\Phi, R)$. 
\end{thm}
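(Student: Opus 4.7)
The plan is to follow the classical Steinberg-type reduction, adapted to the ring setting: given an arbitrary $R$-algebra automorphism $\phi$ of $\mathcal{L}(\Phi, R)$, normalize it in stages by composing with carefully chosen inner automorphisms so that what remains is purely a graph automorphism.

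First, the goal is to produce $g \in G_{\text{ad}}(\Phi, R)$ such that $g^{-1} \circ \phi$ preserves the standard split Cartan subalgebra $\mathcal{H}(R)$. Over a field this is the classical conjugacy of Cartan subalgebras; over a general commutative ring one must use the conjugacy of split maximal toral subalgebras under $G_{\text{ad}}(\Phi, R)$, which follows from scheme-theoretic arguments using the smoothness of the adjoint Chevalley group scheme and patching over $\text{Spec}(R)$. After this step, $\phi$ stabilizes $\mathcal{H}(R)$ and therefore permutes the root spaces $\mathcal{L}_\alpha(R)$.

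Next, the induced map $\alpha \mapsto \alpha'$ on $\Phi$ (defined by $\phi(\mathcal{L}_\alpha) = \mathcal{L}_{\alpha'}$) must preserve the root pairing, so it decomposes as a Weyl group element composed with a diagram symmetry. The Weyl group part is realized by products of $w_\alpha(1) \in E_{\text{ad}}(\Phi, R)$, so after further composition with an inner automorphism one may assume $\alpha' = \rho(\alpha)$ for a (possibly trivial) symmetry $\rho$ of the Dynkin diagram. Over a ring with nontrivial idempotents, different diagram symmetries may occur on different connected components of $\text{Spec}(R)$; this forces the decomposition $R = R_1 \oplus \cdots \oplus R_k$ appearing in the definition of $\mathcal{D}(\Phi, R)$. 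The idempotents $e_i$ are extracted as indicators of the spectral locus on which $\phi$ induces the fixed symmetry $\rho_i$, and the corresponding element of $\mathcal{D}(\Phi, R)$ is $\sum e_i \rho_i$.

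Composing $\phi$ with this element of $\mathcal{D}(\Phi, R)$, the remaining automorphism $\phi'$ fixes $\mathcal{H}(R)$ setwise and sends $X_\alpha \mapsto t_\alpha X_\alpha$ for some units $t_\alpha \in R^*$. The identities $[X_\alpha, X_{-\alpha}] = H_\alpha$ together with the Chevalley commutator relations force $\alpha \mapsto t_\alpha$ to extend to a character $\chi \colon \Lambda_r \to R^*$, realized by $h(\chi) \in T_{\text{ad}}(\Phi, R) \subset G_{\text{ad}}(\Phi, R)$, which also pins down $\phi'|_{\mathcal{H}(R)}$. This finishes the existence. For uniqueness and the semidirect product structure, I would check that an inner automorphism of $\mathcal{L}(\Phi, R)$ that is simultaneously a graph automorphism must be trivial (the graph action on $\mathcal{H}(R)$ is determined, and an inner automorphism fixing each $H_\alpha$ is a torus element whose character must be trivial), and that $\mathcal{D}(\Phi, R)$ normalizes $G_{\text{ad}}(\Phi, R)$ inside $\text{Aut}_R(\mathcal{L}(\Phi, R))$ by direct computation on the generators $x_\alpha(t)$.

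The hardest step is the first one: establishing conjugacy of split Cartan subalgebras of $\mathcal{L}(\Phi, R)$ under $G_{\text{ad}}(\Phi, R)$ over a general commutative ring, since the field-theoretic arguments using density of regular semisimple elements are no longer available. One either invokes general conjugacy results for split maximal tori in reductive group schemes over $\text{Spec}(R)$, or proceeds by reducing modulo maximal ideals, conjugating locally, and gluing via a smoothness/lifting argument; the subsequent Weyl group and torus reductions are then essentially formal.
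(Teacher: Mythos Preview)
The paper does not contain a proof of this theorem; it is quoted as a known result from Klyachko \cite{AK}, with the remark following it noting that the same proof extends to arbitrary $R$ as in Bunina \cite{EB12:main}. In the present paper it functions purely as a black-box input to the argument in Section~\ref{sec:normalizer_of_G_and_E}. There is therefore no in-paper proof to compare your proposal against.

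Regarding your outline on its own merits: the later reductions (Weyl normalization, extracting orthogonal idempotents to record which diagram symmetry acts on which component of $\operatorname{Spec} R$, and identifying the residual diagonal automorphism with an element $h(\chi)$ of the adjoint torus) are standard and essentially correct once the first step is in hand. You are also right that the first step is the crux. But what you have written there is not yet a proof. Smoothness of $G_{\text{ad}}$ over $\mathbb{Z}$ gives you, at best, conjugacy of split Cartans Zariski-locally (via lifting from residue fields); globalizing the local conjugators is a genuine issue, since on overlaps they differ by sections of $N_{G}(T)$, and the resulting \v{C}ech $1$-cochain with values in $N_{G}(T)/T \cong W$ has no formal reason to be trivial. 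The phrase ``patching over $\operatorname{Spec}(R)$'' hides exactly this obstruction. If you want to pursue this route you should either cite a precise conjugacy theorem for split maximal tori (equivalently, split Cartan subalgebras) under $G_{\text{ad}}(\Phi,R)$ over an arbitrary base, or replace the first move by an argument that does not begin with Cartan conjugacy --- which is, in fact, closer to how the references \cite{AK} and \cite{EB12:main} proceed.
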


\begin{rmk}
    In Theorem $1$ of \cite{AK}, A. Klyachko states the above result with certain restrictions on $R$. However, the same proof remains valid without these assumptions (see, for instance, \cite{EB12:main}).
\end{rmk}


\subsubsection{\textbf{Generation of $M_n(R)$ by Elementary Chevalley Groups}}

We now present a lemma from \cite{EB09:alwithhalfele}, which is stated as follows.

\begin{lemma}[{\cite[Lemma 8]{EB09:alwithhalfele}}]\label{lemma:EB09}
    The elementary Chevalley group $E_{\pi} (\Phi, R)$ generates the matrix ring $M_n(R)$.
\end{lemma}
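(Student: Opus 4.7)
Let $A \subseteq M_n(R)$ denote the unital $R$-subalgebra generated by the standard generators $x_\alpha(t) = \exp\bigl(t\,\pi(X_\alpha)\bigr)$ of $E_\pi(\Phi, R)$. The goal is to show $A = M_n(R)$, and my plan is a three-stage argument: first extract each root vector $\pi(X_\alpha)$ from $A$, then use these root vectors to fill out the image of the universal enveloping algebra, and finally invoke a density theorem to identify this image with $M_n(R)$.

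For the first stage I would use that $\pi(X_\alpha)$ is nilpotent, as an element of a finite-dimensional representation of a semisimple Lie algebra; hence $x_\alpha(1) - I$ is nilpotent, and the formal logarithm
\[
    \pi(X_\alpha) \;=\; \log\bigl(x_\alpha(1)\bigr) \;=\; \sum_{k \ge 1} \frac{(-1)^{k-1}}{k}\bigl(x_\alpha(1) - I\bigr)^{k}
\]
is a finite sum in $A$. Under the paper's standing hypotheses ($1/2 \in R$, and $1/3 \in R$ in the ${}^2 A_{2n}$ case), all the required coefficients $1/k$ lie in $R$, so each $\pi(X_\alpha)\in A$. The Chevalley relation $[\pi(X_\alpha),\pi(X_{-\alpha})] = \pi(H_\alpha)$, interpreted inside the associative algebra $A$, then yields each toral generator $\pi(H_\alpha)\in A$, and closure under products places the entire image $\pi\bigl(\mathcal{U}(\mathcal{L})\bigr)$ of the universal enveloping algebra acting on $V(R)$ inside $A$.

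For the last stage I would invoke Burnside--Jacobson density: over any residue field $k$ of $R$, the irreducible representation $\pi$ forces $\pi(\mathcal{U}(\mathcal{L}))$ to span the full matrix algebra $M_n(k)$, and Kostant's $\mathbb{Z}$-form combined with the standing invertibility of small primes promotes this identification to $A = M_n(R)$. I expect this final step to be the main obstacle: while Burnside's theorem is immediate over a field, producing explicit matrix units $E_{ij}\in A$ over the ring $R$ requires a careful use of the weight decomposition $V = \bigoplus V_\mu$ together with invertibility of the weight differences needed to separate distinct weight spaces, which is precisely the role that the hypotheses $1/2\in R$ (and $1/3\in R$ when applicable) play.
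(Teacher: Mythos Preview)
The paper does not itself prove this lemma; it is quoted from \cite[Lemma~8]{EB09:alwithhalfele} and invoked as a black box in the proof of Lemma~\ref{key lemma}. So there is no in-paper argument to compare against. Nonetheless, your proposal has real gaps.

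Your logarithm extraction of $\pi(X_\alpha)$ works only when the nilpotency degree of $\pi(X_\alpha)$ is small enough that all the needed $1/k$ lie in $R$; for an arbitrary faithful $\pi$ this degree can be large, and the standing hypotheses $1/2\in R$ (and $1/3\in R$ in type ${}^2A_{2n}$) do not cover it. More seriously, your third stage is not a proof. You apply Burnside to ``the irreducible representation $\pi$'', but in the paper $\pi$ is only assumed faithful, and even when it is irreducible over $\mathbb{C}$ it need not remain irreducible after reduction modulo a maximal ideal of $R$. Finally, the passage from ``$A$ surjects onto $M_n(k)$ for every residue field $k$'' to ``$A=M_n(R)$'' is not automatic over a general commutative ring; it requires either a localness/Nakayama argument or an explicit construction of matrix units over $R$, neither of which you supply. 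You flag this step as the main obstacle yourself, and the appeal to Kostant's $\mathbb{Z}$-form together with ``invertibility of small primes'' does not resolve it.
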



\subsubsection{\textbf{\texorpdfstring{$E'_\sigma (R)$}{E(R)} is a characteristic subgroup of \texorpdfstring{$G_\sigma(R)$}{G(R)}}}

Recall that a subgroup $H$ of $G$ is called \textit{characteristic} if it is preserved under every automorphism of $G$, meaning that for any automorphism $f$ of $G$, we have $f(H) \subset H$. In the following 

\begin{thm}[{\cite[Theorem 11.1]{SG&DM1}}]\label{char subgrp}
    Let $\Phi_\rho$ be one of the following types: ${}^2 A_n \ (n \geq 3), {}^2 D_n \ (n \geq 4), {}^2 E_6$ or ${}^3 D_4$. Assume that $1/2 \in R$, and in addition, $1/3 \in R$ if $\Phi_\rho \sim {}^3 D_4$. 
    Additionally, assume that $R$ is a Noetherian ring.
    Let $H$ be a subgroup of $G_{\pi, \sigma} (\Phi, R)$ containing $E'_{\pi, \sigma} (\Phi, R)$. Then
    $E'_{\pi, \sigma} (\Phi, R)$ is a characteristic subgroup of $H$. In particular, $E'_{\pi, \sigma} (\Phi, R)$ is a characteristic subgroup of $G_{\pi, \sigma} (\Phi, R)$.
\end{thm}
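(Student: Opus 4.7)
The plan is to show that any automorphism $f$ of $H$ induces an $R$-algebra automorphism $\tilde f$ of the Chevalley algebra $\mathcal{L}(\Phi,R)$ that is compatible with the semi-automorphism $\sigma$, then invoke Klyachko's classification (Theorem~\ref{thm:AK}) to decompose it as an inner automorphism composed with a graph automorphism, and finally observe that both factors preserve $E'_{\pi,\sigma}(\Phi, R)$.

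For the linearization, I would restrict $f$ to $E'_{\pi,\sigma}(\Phi, R) \subseteq H$ and study its action by conjugation on the one-parameter root subgroups $\{x_\alpha(t) : t \in R_\theta\}$ and on their twisted companions obtained by multiplying by an anti-invariant scalar (available after a localization at $2$ if necessary). Using the basis of $\mathcal{L}_\sigma(\Phi, R)$ given in Section~\ref{subsec:TCA} together with the tangent-algebra computation to be carried out in Section~\ref{sec:TA of ETCG}, one extracts from this action an $R_\theta$-linear Lie algebra automorphism of $\mathcal{L}_\sigma(\Phi, R)$. The invertibility of $2$ (and of $3$ when $\Phi_\rho \sim {}^2A_{2n}$) is needed to differentiate $x_\alpha(t) = \exp(t\pi(X_\alpha))$ and isolate linear terms, while the Noetherian hypothesis on $R$ enters to control the image of the Cartan part via finite generation. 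Extending scalars yields an $R$-algebra automorphism $\tilde f$ of $\mathcal{L}(\Phi, R) = R\otimes_{R_\theta}\mathcal{L}_\sigma(\Phi, R)$ that commutes with $\sigma$.

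Next, apply Theorem~\ref{thm:AK} to write $\tilde f = \mathrm{Int}(g)\circ \rho'$ uniquely, with $g \in G_{\mathrm{ad}}(\Phi,R)$ and $\rho' \in \mathcal{D}(\Phi, R)$. The constraint $\tilde f \circ \sigma = \sigma \circ \tilde f$, together with the uniqueness of the decomposition, forces $g \in G_{\mathrm{ad},\sigma}(\Phi, R)$ and forces $\rho'$ to commute with the graph part of $\sigma$. Conjugation by such a $g$ preserves the elementary twisted subgroup (by normality in the adjoint form), and a $\sigma$-equivariant graph automorphism permutes the root elements $x_{[\alpha]}(\cdot)$ and hence preserves it as well. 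Pulling back through the central isogeny from $G_{\mathrm{ad},\sigma}(\Phi, R)$ to $G_{\pi,\sigma}(\Phi, R)$ then gives $f(E'_{\pi,\sigma}(\Phi, R)) = E'_{\pi,\sigma}(\Phi, R)$.

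The main obstacle is the linearization step: producing the Lie algebra automorphism $\tilde f$ from the purely group-theoretic datum $f$. In the untwisted case Lemma~\ref{lemma:EB09} supplies a clean shortcut, namely that $E_\pi(\Phi, R)$ spans $M_n(R)$ as an $R$-algebra, so any group automorphism immediately extends to a ring automorphism of $M_n(R)$ and thence to one of $\mathcal{L}(\Phi, R)$. In the twisted case, however, $E'_{\pi,\sigma}(\Phi, R)$ only spans an $R_\theta$-subalgebra of $M_n(R)$, so one cannot argue this way and must instead recover $\mathcal{L}_\sigma(\Phi, R)$ by a direct derivative computation on one-parameter subgroups and only then extend scalars to $R$. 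Handling this carefully is precisely the purpose of the tangent-algebra material in Section~\ref{sec:TA of ETCG} and the key lemma of Section~\ref{sec:key lemma}, and it is here that the hypotheses on $1/2$, $1/3$, and the Noetherian property of $R$ play their essential roles.
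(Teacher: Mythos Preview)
First, note that the paper does not prove this statement at all: Theorem~\ref{char subgrp} is quoted as a known result from \cite{SG&DM1}, listed among the preliminaries with no argument given. So there is no ``paper's own proof'' to compare against; the comparison has to be with the approach in \cite{SG&DM1}, which is in the spirit of Vaserstein's and Taddei's work on normality and perfectness of the elementary subgroup, not in the spirit of Klyachko's Lie-algebra classification.

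Your proposal has a genuine gap at the linearization step. You write that you would ``restrict $f$ to $E'_{\pi,\sigma}(\Phi,R)$'' and then extract a Lie algebra automorphism via the tangent-algebra construction. But $f$ is an abstract automorphism of $H$; you do not know that $f$ maps $E'_{\pi,\sigma}(\Phi,R)$ into itself --- that is precisely the conclusion you are trying to prove --- so the restriction is not a self-map and feeds nothing into the tangent-algebra machine. More fundamentally, the construction in Section~\ref{sec:TA of ETCG} recovers $\mathcal{L}_\sigma(\Phi,R)$ from $E'_\sigma(R[t])$ over the polynomial ring, and the paper applies it in Section~\ref{sec:normalizer_of_G_and_E} only to conjugation by a fixed matrix $g$, which extends trivially to $R[t]$. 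An abstract group automorphism $f\colon H\to H$ has no reason to extend to any group over $R[t]$, so there is no way to differentiate it. The same objection applies to your untwisted shortcut: the fact that $E_\pi(\Phi,R)$ spans $M_n(R)$ as an $R$-algebra does not let you extend a group automorphism of $E_\pi(\Phi,R)$ to a ring automorphism of $M_n(R)$; that would require knowing a priori that $f$ is additive on the span, which it is not.

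The route actually taken in \cite{SG&DM1} (paralleling \cite{LV} and \cite{GT}) is purely group-theoretic: one establishes normality of $E'_{\pi,\sigma}(\Phi,R)$ in $G_{\pi,\sigma}(\Phi,R)$ and perfectness of $E'_{\pi,\sigma}(\Phi,R)$, and then characterizes $E'_{\pi,\sigma}(\Phi,R)$ intrinsically inside any intermediate $H$ (for instance as a suitable commutator subgroup), which is automatically preserved by every automorphism. The Noetherian hypothesis enters through the normality proof (localization and patching arguments), not through any finite-generation of a Cartan piece as you suggest. Also, a minor slip: the $1/3$ hypothesis in this theorem is for $\Phi_\rho\sim{}^3D_4$, not ${}^2A_{2n}$; you have conflated it with the hypothesis of the main Theorem~\ref{thm:normalizer_of_G_and_E}.
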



\section{Tangent Algebra of Elementary Twisted Chevalley Groups}\label{sec:TA of ETCG}


In this section, we show that the twisted Chevalley algebra associated with elementary Chevalley groups can be reconstructed directly from the group itself. To achieve this, we introduce the concept of tangent algebra, following the approach developed by A. A. Klyachko in \cite{AK} for Chevalley groups.

Let $R$ be a commutative ring with unity. 
Assume that there exists an automorphism $\theta$ of $R$ of order two. 
This automorphism can be extended to the polynomial ring $R[t]$ with the variable $t$ as follows: $t \mapsto t$ and $r \mapsto \theta(r)$ for every $r \in R$. Consequently, the group $E'_{\text{ad}, \sigma}(\Phi, R[t])$ is well-defined and contains $E'_{\text{ad}, \sigma}(\Phi, R)$ as a subgroup.

Let $\mathcal{L}$ be a standard simple Lie algebra of type $\Phi$ over $\mathbb{C}$. Consider the twisted Chevalley algebra $\mathcal{L}_\sigma(\Phi, R)$ and the corresponding adjoint elementary twisted Chevalley group $E'_\sigma(R) = E'_{\text{ad}, \sigma}(\Phi, R)$. 
For $f(t) \in R_\theta[t]$, we define a map 
\[
    \text{REP}_f: E'_\sigma(R[t]) \longrightarrow E'_\sigma(R[t])
\]
that sends $g(t) \mapsto f(g(t))$. This map is then an endomorphism of the group $E'_\sigma(R)$.
Define a tangent space
$$T(E'_\sigma (R)) = \{ X \in M_n(R) \mid 1 + t X + t^2 Y \in E'_\sigma (R[t]) \text{ for some } Y \in M_n(R[t]) \}.$$
This set is an $R_\theta [E'_\sigma (R)]$-module, i.e., it is closed with respect to the following operations: 
\begin{enumerate}[(i)]
    \item Addition: $(1 + t X + o(t))(1+ t Y + o(t)) = 1 + t(X+Y) + o(t)$; 
    \item The scalar multiplication by $r \in R_\theta$: $\text{REP}_{rt}(1 + tX + o(t)) = 1 + r t X + o(t)$;
    \item The action of the group $E'_\sigma (R)$: $g(1 + t X + o(t))g^{-1} = (1+t gXg^{-1} + o(t))$ (henceforth, we define $g \circ X := g X g^{-1}$).
\end{enumerate}

Observe that the map $\sigma: E_{\text{ad}}(\Phi, R) \longrightarrow E_{\text{ad}}(\Phi, R)$ can be extended to a $R_\theta$-linear map from $M_n(R)$ onto itself, also denoted by the same symbol $\sigma$. 

\begin{prop}\label{prop:tangentspace of E'(R)}
    \normalfont
    $\mathcal{L}_\sigma (\Phi, R) = T(E'_\sigma (R)).$
\end{prop}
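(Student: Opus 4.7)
The plan is to prove the two inclusions separately: $\mathcal{L}_\sigma(\Phi, R) \subseteq T(E'_\sigma(R))$ by realizing each basis vector of $\mathcal{L}_\sigma(\Phi, R)$ as the $t$-linear coefficient of an explicit curve in $E'_\sigma(R[t])$, and $T(E'_\sigma(R)) \subseteq \mathcal{L}_\sigma(\Phi, R)$ by identifying tangent vectors with $\sigma$-fixed derivations of the Chevalley algebra via Theorem~\ref{thm:AK}.

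For the forward inclusion, the root vectors $X^{+}_{[\alpha]}$ come directly from $\sigma$-invariant root subgroup elements in $U_\sigma(R[t])$: namely $x_\alpha(t)$ when $[\alpha] \sim A_1$; the commuting product $x_\alpha(t)\,x_{\bar\alpha}(t)$ when $[\alpha] \sim A_1^2$; and the Steinberg two-parameter expression $x_\alpha(t)\,x_{\bar\alpha}(t)\,x_{\alpha+\bar\alpha}(\tfrac{1}{2} N_{\alpha,\bar\alpha} t^2)$ when $[\alpha] \sim A_2$. The element $X^{-}_{[\alpha]}(\mathrm{I}) = a(X_\alpha - X_{\bar\alpha})$ arises from $x_\alpha(at)\,x_{\bar\alpha}(-at)$, which is $\sigma$-fixed because $\theta(a) = -a$ and the factors commute; $X^{-}_{[\alpha]}(\mathrm{II}) = aX_{\alpha+\bar\alpha}$ arises from $x_{\alpha+\bar\alpha}(at)$, which is $\sigma$-fixed because $\epsilon_{\alpha+\bar\alpha} = -1$ by Lemma~\ref{epsilonalpha}. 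For the Cartan vectors $H^{\pm}_{[\alpha_i]}$, since no generator of $E'_\sigma$ lies in the torus, I would produce them by conjugation: the curve $g(t) := x^{+}_{[\alpha]}(1)\,x^{+}_{[-\alpha]}(t)\,x^{+}_{[\alpha]}(-1) \in E'_\sigma(R[t])$ has $t$-linear coefficient $X^{+}_{[-\alpha]} + [X^{+}_{[\alpha]}, X^{+}_{[-\alpha]}] - X^{+}_{[\alpha]}$ obtained by expanding $\mathrm{Ad}(x^{+}_{[\alpha]}(1))$. Subtracting the already-realized root tangents and using that $T(E'_\sigma(R))$ is an $R_\theta$-module closed under the $E'_\sigma(R)$-action isolates the Cartan bracket; an analogous computation with the $\mathrm{I}$-type negative root element in place of $X^{+}_{[-\alpha]}$ produces $H^{-}_{[\alpha_i]}$.

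For the reverse inclusion, let $X \in T(E'_\sigma(R))$ and write $g(t) := 1 + tX + t^2 Y \in E'_\sigma(R[t]) \subseteq G_{\mathrm{ad}}(\Phi, R[t])$. By Theorem~\ref{thm:AK}, $g(t)$ acts as an $R[t]$-algebra automorphism of $\mathcal{L}(\Phi, R[t])$; differentiating the identity $g(t)([Z_1, Z_2]) = [g(t)Z_1,\, g(t)Z_2]$ at $t = 0$ shows that $X$ is an $R$-linear derivation of $\mathcal{L}(\Phi, R)$. Because the discrete factor $\mathcal{D}(\Phi, R)$ in Theorem~\ref{thm:AK} contributes nothing infinitesimally at the identity component, we conclude $X = \mathrm{ad}(L)$ for a unique $L \in \mathcal{L}(\Phi, R)$, which we identify with $X$ via the adjoint embedding (valid as $\mathcal{L}$ is centerless for the types under consideration). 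Finally, the $\sigma$-invariance $\sigma(g(t)) = g(t)$, combined with $\sigma(t) = t$ and the $R_\theta[t]$-linear extension of $\sigma$ to $M_n(R[t])$, forces $\sigma(X) = X$, placing $X$ in $\mathcal{L}_\sigma(\Phi, R)$.

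The main obstacle is the Cartan portion of the forward inclusion: one must verify that the conjugation curves really lie in $E'_\sigma(R[t])$ rather than in the larger $G'_\sigma(R[t])$, and carefully handle the $A_2$-type twisted subgroups where $x_\alpha(t)$ and $x_{\bar\alpha}(t)$ fail to commute, so the naive product is not $\sigma$-invariant and must be corrected by a $x_{\alpha+\bar\alpha}$ factor determined by the Chevalley structure constant $N_{\alpha,\bar\alpha}$. A secondary technical point is the derivation-to-inner step of the reverse inclusion: over an arbitrary Noetherian ring with $1/2 \in R$ (and $1/3 \in R$ when $\Phi_\rho \sim {}^2 A_{2n}$), one must justify that differentiating the automorphism decomposition of Theorem~\ref{thm:AK} really produces an element of $\mathcal{L}(\Phi, R)$ with no residual contribution from $\mathcal{D}(\Phi, R)$.
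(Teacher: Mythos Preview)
Your forward inclusion matches the paper's: both realize each basis vector of $\mathcal{L}_\sigma(\Phi,R)$ as the linear coefficient of an explicit element of $E'_\sigma(R[t])$, using the unipotent generators $x_{[\alpha]}(\cdot)$ for the root pieces and conjugation by $x_{[\alpha]}(1)$ (resp.\ $x_{[\alpha]}(1,1/2)$ in the $A_2$ case) to produce the Cartan pieces. Your worry about these conjugation curves landing in $E'_\sigma(R[t])$ rather than $G'_\sigma(R[t])$ is unfounded: they are products of unipotent generators, so they lie in $E'_\sigma$ by definition.

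The reverse inclusion is where you and the paper genuinely diverge. The paper argues elementarily: write $1+tX+o(t)$ as a finite product $\prod x_{[\alpha_j]}(f_j(t))$ with each $f_j$ a monomial of degree $0$ or $1$; setting $t=0$ shows the product of the degree-$0$ factors is $1$; regrouping then expresses $1+tX+o(t)$ as a product of $E'_\sigma(R)$-conjugates of the degree-$1$ factors, whose linear terms are (scalar multiples of) the basis vectors $X^{\pm}_{[\alpha_j]}$. Thus $X$ is an $R_\theta$-linear combination of $E'_\sigma(R)$-conjugates of such vectors, hence lies in $\mathcal{L}_\sigma(\Phi,R)$. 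This uses nothing beyond the generator presentation of $E'_\sigma$.

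Your route via derivations is conceptually reasonable but has a genuine gap at the step you yourself flag. From $g(t)\in G_{\mathrm{ad}}(\Phi,R[t])$ you correctly obtain that $X$ is an $R$-derivation of $\mathcal{L}(\Phi,R)$. However, Theorem~\ref{thm:AK} describes automorphisms, not derivations, and ``the discrete factor $\mathcal{D}$ contributes nothing infinitesimally'' is not a consequence of that theorem as stated: to pass from automorphisms to derivations you need either (i) the scheme-theoretic identification $\mathrm{Lie}(G_{\mathrm{ad}})=\mathrm{ad}(\mathcal{L})$, which is essentially the untwisted version of the very proposition you are proving, or (ii) the independent Lie-algebraic fact $\mathrm{Der}_R(\mathcal{L}(\Phi,R))=\mathrm{ad}(\mathcal{L}(\Phi,R))$. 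Statement (ii) is indeed known for the simply-laced types at hand, but it is an input external to this paper and is not what Theorem~\ref{thm:AK} asserts. The paper's direct generator argument sidesteps this entirely and is self-contained.
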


\begin{proof} 
    \noindent $(\subseteq):$ Consider the $R_\theta$-basis of $\mathcal{L}_\sigma(\Phi, R)$ as described in Section~\ref{subsec:TCA}. To establish the result, it suffices to show that each basis element lies in $T(E'_\sigma(R))$. We proceed as follows:
    
    \vspace{2mm}

    \noindent \textbf{Case $[\alpha] \sim A_1$ or $A_1^2$.}
    \begin{enumerate}[(a)]
        \item $X_{[\alpha]}^{+} \in T(E'_\sigma (R))$ because $x_{[\alpha]}(t) \in E'_\sigma (\Phi, R_\theta[t])$.
        \item $X_{[\alpha]}^{-} (\mathrm{I}) \in T(E'_\sigma (R))$ because $x_{[\alpha]}(a t) \in E'_\sigma (\Phi, R_\theta[t])$ (this is only in the case of $[\alpha] \sim A_1^2$).
        \item $H_{[\alpha]}^{+} \in T(E'_\sigma (R))$ because $H_{[\alpha]}^{+} = x_{[\alpha]} (1) \circ X_{-[\alpha]}^{+} + X_{[\alpha]}^{+} - X_{-[\alpha]}^{+}$.
        \item $H_{[\alpha]}^{-} \in T(E'_\sigma (R))$ because $H_{[\alpha]}^{-} = x_{[\alpha]} (1) \circ X_{-[\alpha]}^{-} (\mathrm{I}) + X_{[\alpha]}^{-}(\mathrm{I}) - X_{-[\alpha]}^{-}(\mathrm{I})$ (this is only in the case of $[\alpha] \sim A_1^2$).
    \end{enumerate}

    \noindent \textbf{Case $[\alpha] \sim A_2$.}
    \begin{enumerate}[(a)]
        \item $X_{[\alpha]}^{+} \in T(E'_\sigma (R))$ because $x_{[\alpha]}(t, t^2/2) \in E'_\sigma (\Phi, R_\theta[t])$.
        \item $X_{[\alpha]}^{-} (\mathrm{I}) \in T(E'_\sigma (R))$ because $x_{[\alpha]}(at, (at)^2/2) \in E'_\sigma (\Phi, R_\theta[t])$.
        \item $X_{[\alpha]}^{-} (\mathrm{II}) \in T(E'_\sigma (R))$ because $x_{[\alpha]}(0, at) \in E'_\sigma (\Phi, R_\theta[t])$.
        \item $H_{[\alpha]}^{+} \in T(E'_\sigma (R))$ because $H_{[\alpha]}^{+} = x_{[\alpha]} (1, 1/2) \circ X_{-[\alpha]}^{+} + \frac{1}{2} X_{[\alpha]}^{+} - X_{-[\alpha]}^{+}$.
        \item $H_{[\alpha]}^{-} \in T(E'_\sigma (R))$ because $H_{[\alpha]}^{-} = x_{[\alpha]} (1, 1/2) \circ X_{-[\alpha]}^{-} + \frac{3}{2} X_{[\alpha]}^{-} (\mathrm{I}) - X^{-}_{[\alpha]}(\mathrm{II}) - X^{-}_{-[\alpha]}(\mathrm{I})$.
    \end{enumerate}

    \noindent $(\supseteq):$ 
    Let $X \in T(E'_\sigma (R))$, i.e., $1 + t X + o(t) \in E'_\sigma (R[t])$. Write it as a product of elementary generators:
    \begin{equation}\label{eq_6.2.1}
        1 + t X + o(t) = \prod_{j=1}^m x_{[\alpha_j]}(f_j(t)),
    \end{equation}
    where $f_j (t) = r_j \cdot t^{k_j}$ if $[\alpha_j] \sim A_1$ or $A_1^2$; $(r_j^{(1)} t^{k_j^{(1)}}, r_j^{(2)} t^{k_j^{(2)}})$ if $[\alpha] \sim A_2$.
    Without loss of generality, we can assume that $k_j \in \{ 0, 1 \}$ and $(k_{j}^{(1)}, k_{j}^{(2)}) \in \{ (0,0), (1,2), (0,1) \}$.
    Let $\mathcal{J}'$ be the subset of $\mathcal{J} = \{1, \dots, m\}$ consisting of all $j$ such that $k_j = 0$ or $k_j^{(1)} = k_j^{(2)} = 0$.
    Now, for all $j \in \mathcal{J} \setminus \mathcal{J}'$, define
    \[
        g_j = \prod_{i \in \mathcal{J}_j} x_{[\alpha_i]}(f_i (t)) = \prod_{i \in \mathcal{J}_j} x_{[\alpha_i]}(f_i (0)) \in E'_\sigma (R),
    \]
    where $\mathcal{J}_i = \mathcal{J}' \cap \{ 1, \dots, j\}$.
    By putting $t=0$ in equation (\ref{eq_6.2.1}), we obtain
    \[
        g_m = \prod_{j \in \mathcal{J}'} x_{[\alpha_j]}(f_j (0)) = 1.
    \]
    Using this, we can rewrite equation (\ref{eq_6.2.1}) as
    \[
        1 + tX + o(t) = \prod_{j \in \mathcal{J} \setminus \mathcal{J}'} g_j x_{[\alpha_j]}(f_j(t)) g_j^{-1}.
    \]
    By comparing both sides, we conclude that $X = \displaystyle\sum_{j \in \mathcal{J} \setminus \mathcal{J}'} g_j \circ (r_j X^{\pm}_{[\alpha_j]}) \in \mathcal{L}_\sigma (\Phi, R)$, as desired.
\end{proof}


\section{Key Lemma}\label{sec:key lemma}

We now introduce a lemma that plays a crucial role in establishing the main result of this chapter.

\begin{lemma}\label{key lemma}
    The set $E'_{\pi, \sigma} (\Phi, R)$ generates $M_n (R)$ as an $R$-algebra, where $n$ is the dimension of the corresponding representation $\pi$ of Lie algebra $\mathcal{L}$. 
\end{lemma}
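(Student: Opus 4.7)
The plan is to let $A$ denote the $R$-subalgebra of $M_n(R)$ generated by $E'_{\pi,\sigma}(\Phi, R)$ and to prove $E_\pi(\Phi, R) \subseteq A$; the conclusion $A = M_n(R)$ then follows immediately from Lemma~\ref{lemma:EB09}. The strategy proceeds in two phases.

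In the first phase, I will show $\pi(\mathcal{L}(\Phi, R)) \subseteq A$ by adapting the construction used in the ($\subseteq$) direction of Proposition~\ref{prop:tangentspace of E'(R)}. For each basis vector $X$ of the twisted Chevalley algebra $\mathcal{L}_\sigma(\Phi, R)$ exhibited in Section~\ref{subsec:TCA}, that proof provides an explicit polynomial curve $g_X(t) \in E'_{\pi, \sigma}(\Phi, R[t])$ (with $\theta$ extended by $\theta(t) = t$) such that $g_X(t) \equiv 1 + t\,\pi(X) \pmod{t^2}$. Since $g_X(t)$ is polynomial in $t$ of some bounded degree $d$, the evaluations $g_X(0), g_X(1), \dots, g_X(d)$ all lie in $E'_{\pi, \sigma}(\Phi, R) \subseteq A$, and inverting the associated Vandermonde system $(g_X(i))_{i} = V \cdot (P_k)_k$ isolates the linear coefficient $P_1 = \pi(X) \in A$, provided $\det V = \prod_{0 \le i < j \le d}(j - i)$ is a unit in $R$. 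The hypotheses $1/2 \in R$ (and $1/3 \in R$ in the ${}^2A_{2n}$ case) guarantee this invertibility for the small degrees that arise from the explicit curves. Iterating over all basis vectors and using the decomposition $\mathcal{L}(\Phi, R) = R \otimes_{R_\theta} \mathcal{L}_\sigma(\Phi, R)$ together with the $R$-linearity of $A$ yields $\pi(\mathcal{L}(\Phi, R)) \subseteq A$.

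In the second phase, I will pass from the Lie algebra content to the unipotent root elements. For each $\alpha \in \Phi$ and $r \in R$, we have $x_\alpha(r) = \sum_{k=0}^{m_\alpha} r^k \pi(X_\alpha)^k/k!$, where the powers $\pi(X_\alpha)^k$ lie in $A$ (coming from $\pi(X_\alpha) \in A$ via the previous phase) and each divided power $\pi(X_\alpha)^k/k!$ is an integer matrix by the Chevalley basis property. The invertibility of the small factorials in $R$ therefore places each summand in $A$, so $x_\alpha(r) \in A$. This gives $E_\pi(\Phi, R) \subseteq A$, and Lemma~\ref{lemma:EB09} completes the argument.

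The hardest part will be the Vandermonde control in the first phase: the degree $d$ of the curves $g_X(t)$ is determined by the nilpotent degrees $m_\alpha$ of the representation $\pi$, and one must verify case by case, for each class $[\alpha] \in \Phi_\rho$, that no prime outside $\{2\}$ (resp.\ $\{2,3\}$ for ${}^2A_{2n}$) divides $\det V$. The $A_2$-type classes occurring only in ${}^2A_{2n}$, with their two-parameter generators $x_{[\alpha]}(t, t^2/2)$ and factor-of-$3$ structure constants, are precisely the setting where the additional hypothesis $1/3 \in R$ is needed.
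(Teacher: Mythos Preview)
Your overall plan --- isolate each $\pi(X_\alpha)$ as an $R$-linear combination of elements of $E'_{\pi,\sigma}(\Phi, R)$, deduce $E_\pi(\Phi, R) \subseteq A$, and then invoke Lemma~\ref{lemma:EB09} --- is exactly the paper's strategy, and your Vandermonde interpolation is a natural abstraction of the explicit difference formulas the paper writes down.

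There is, however, a real gap in your degree control. For an arbitrary faithful representation $\pi$, the nilpotency index of $\pi(X_\alpha)$ is bounded only by the longest $\alpha$-string in the weight diagram of $\pi$, and this can be arbitrarily large (already for symmetric powers of the natural representation of $\mathfrak{sl}_2$). Hence the degree $d$ of your curves $g_X(t)$ is not a priori small, and the Vandermonde determinant $\prod_{0\le i<j\le d}(j-i)$ will in general contain primes other than $2$ and $3$; the same issue recurs in your second phase, where you need $1/k!\in R$ to pass from $\pi(X_\alpha)^k\in A$ to $\pi(X_\alpha)^k/k!\in A$ (divided-power integrality over $\mathbb{Z}$ does not by itself place the divided power in the $R$-subalgebra $A$). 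Your claim that ``the hypotheses $1/2\in R$ (and $1/3\in R$) guarantee this invertibility for the small degrees that arise'' is therefore unjustified as stated.

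The paper closes this gap by assuming at the outset that $\pi$ is a \emph{minimal} representation in Vavilov's sense, for which $\pi(X_\alpha)^3=0$ for every $\alpha\in\Phi$. With that nilpotency bound each $x_{[\alpha]}(t)$ is quadratic in its argument, and the paper extracts $\pi(X_\alpha)$ by the odd-part trick $x_{[\alpha]}(s)-x_{[\alpha]}(-s)$ at $s=1$ and $s=a$ (and, for the $A_2$-type classes, via the auxiliary function $\mathcal{F}_{[\alpha]}(t,u)=x_{[\alpha]}(t,u)-x_{[\alpha]}(-t,u)$ evaluated at two scales), so that only the denominators $2$, $4$, $12$, $24$ appear. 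If you insert the same minimality hypothesis, your Vandermonde argument goes through with $d\le 2$ and recovers precisely these formulas; without it, the argument does not close.
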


\begin{proof}
        Let $\mathcal{M}$ be the $R$-subalgebra of $M_n (R)$ generated by the set $E'_\sigma (R)$. To prove the lemma we must show that $\mathcal{M} = M_n (R)$.
        We now assume that $\pi$ is a minimal representation (the definition of minimal representation can be found in Section 3 of Chapter 3 in \cite{NV1}). Therefore $\pi (X_\alpha)^3 = 0$ for all $\alpha \in \Phi$ (note that we only consider the case where $\Phi \sim A_n, D_n$, or $E_6$). In particular, we have 
        \[
            x_{\alpha}(t) = 1 + t \, \pi(X_\alpha) + \frac{t^2 \, \pi (X_\alpha)^2}{2!}.
        \]

        We now claim that the elements $\pi (X_\alpha) \in \mathcal{M}$ for every $\alpha \in \Phi$.
        To see this, let $\alpha \in \Phi$ and let $[\alpha]$ denote the corresponding element in $\Phi_\rho$. 
        Just for our convenience, we use the notation $X_{\alpha}$ to denote the linear map $\pi(X_{\alpha})$.
    
        Suppose $[\alpha] \sim A_1$, then 
        \[
            x_{[\alpha]}(t) = x_{\alpha}(t) = 1 + t \, X_\alpha + \frac{t^2 \, (X_\alpha)^2}{2!}.
        \] 
        Then we can easily check that 
        \[
            X_\alpha = \frac{(x_{[\alpha]}(1) - x_{[\alpha]}(-1))}{2}.
        \] 
    
        Now suppose $[\alpha] \sim A_1^2$, then 
        \begin{equation*}
            \begin{split}
                x_{[\alpha]}(t) = x_{\alpha} (t) x_{\bar{\alpha}}(\bar{t}) = 1 + t \, X_\alpha + \bar{t} \, X_{\bar{\alpha}} + \frac{t^2 \, (X_\alpha)^2}{2} + \frac{(\bar{t})^2 \, (X_{\bar{\alpha}})^2}{2} + t \bar{t} \, (X_\alpha \, X_{\bar{\alpha}}) \\
                \frac{t (\bar{t})^2 \, (X_\alpha \, (X_{\bar{\alpha}})^2)}{2} + \frac{(t^2) \bar{t} \, ((X_\alpha)^2 \, X_{\bar{\alpha}})}{2} + \frac{(t \bar{t})^2 \, ((X_\alpha)^2 \, (X_{\bar{\alpha}})^2)}{4}.
            \end{split}
        \end{equation*}
        A simple calculation on roots shows that 
        \[
            X_\alpha \, (X_{\bar{\alpha}})^2 
            = (X_\alpha)^2 \, X_{\bar{\alpha}} 
            = (X_\alpha)^2 \, (X_{\bar{\alpha}})^2
            = 0.
        \]
        Therefore, 
        \begin{equation*}
            x_{[\alpha]}(t) = 1 + 
            \Big( t \, X_\alpha + \bar{t} \, X_{\bar{\alpha}} \Big) + 
            \frac{1}{2} \Big( t \, X_\alpha + \bar{t} \, X_{\bar{\alpha}} \Big)^2.
        \end{equation*}
        Then we have 
        \begin{align*}
            X_{\alpha} + X_{\bar{\alpha}} &= \frac{1}{2} (x_{[\alpha]}(1) - x_{[\alpha]}(-1)), \text{ and} \\
            X_{\alpha} - X_{\bar{\alpha}} &= \frac{1}{2a} (x_{[\alpha]}(a) - x_{[\alpha]}(-a)).
        \end{align*}
        Hence 
        \begin{align*}
            X_{\alpha} &= \frac{1}{4} \Big( \big(x_{[\alpha]}(1) - x_{[\alpha]}(-1) \big) + a^{-1} \big(x_{[\alpha]}(a) - x_{[\alpha]}(-a) \big) \Big), \text{ and} \\
            X_{\bar{\alpha}} &= \frac{1}{4} \Big( \big(x_{[\alpha]}(1) - x_{[\alpha]}(-1) \big) - a^{-1} \big(x_{[\alpha]}(a) - x_{[\alpha]}(-a) \big) \Big).
        \end{align*}

        Finally suppose $[\alpha] \sim A_2$, then 
        \begin{align*}
            x_{[\alpha]}(t,u) &= x_{\alpha} (t) x_{\bar{\alpha}}(\bar{t}) x_{\alpha + \bar{\alpha}} (N_{\bar{\alpha}, \alpha} u) \\
            &= 1 + \Big( t \, X_\alpha + \bar{t} \, X_{\bar{\alpha}} + N_{\bar{\alpha}, \alpha} u \, X_{\alpha + \bar{\alpha}} \Big) 
            + \Big( \frac{t^2}{2} \, (X_\alpha)^2 + \frac{(\bar{t})^2}{2} \, (X_{\bar{\alpha}})^2 + \frac{u^2}{2} \, (X_{\alpha + \bar{\alpha}})^2 \\
            & \hspace{5mm} + t \bar{t} \, (X_\alpha \, X_{\bar{\alpha}}) + N_{\bar{\alpha},\alpha} t u \, (X_\alpha \, X_{\alpha + \bar{\alpha}}) + N_{\bar{\alpha},\alpha} \bar{t} u \, (X_{\bar{\alpha}} \, X_{\alpha + \bar{\alpha}}) \Big) \\
            & \hspace{5mm} + \Big( N_{\bar{\alpha}, \alpha} t \bar{t} u \, (X_\alpha \, X_{\bar{\alpha}} \, X_{\alpha+ \bar{\alpha}}) + \frac{t^2 \bar{t}}{2} \, ((X_\alpha)^2 \, X_{\bar{\alpha}}) + \frac{t (\bar{t})^2}{2} \, (X_\alpha \, (X_{\bar{\alpha}})^2) \Big).
        \end{align*}
        Note that all the other terms in the expansion of the product $x_{\alpha} (t) x_{\bar{\alpha}}(\bar{t}) x_{\alpha + \bar{\alpha}} (N_{\bar{\alpha}, \alpha} u)$ which does not appear in the above expression are $0$ (for instance $X_\alpha \, (X_{\alpha + \bar{\alpha}})^2 = 0, (X_\alpha)^2 \, X_{\alpha + \bar{\alpha}} = 0,$ etc.). 
        First note that, 
        \[
            X_{\alpha + \bar{\alpha}} = \frac{N_{\bar{\alpha},\alpha}}{2a} \Big( X_{[\alpha]}(0,a) - x_{[\alpha]}(0, -a) \Big).
        \]
        Now define
        \begin{align*}
            \mathcal{F}_{[\alpha]}(t, u) &= x_{[\alpha]}(t,u) - x_{[\alpha]}(-t, u) \\
            &= 2(t X_\alpha + \bar{t} X_{\bar{\alpha}}) + 2 u (t X_\alpha X_{\alpha + \bar{\alpha}} + \bar{t} X_{\bar{\alpha}} X_{\alpha + \bar{\alpha}}) + t \bar{t} (t (X_\alpha)^2 X_{\bar{\alpha}} + \bar{t} X_{\alpha} (X_{\bar{\alpha}})^2).
        \end{align*}
        Then 
        \begin{align*}
            X_{\alpha} + X_{\bar{\alpha}} &= \frac{1}{12} \big( 8 \mathcal{F}(1, 1/2) - \mathcal{F}(2, 2) \big), \text{ and } \\
            X_{\alpha} - X_{\bar{\alpha}} &= \frac{1}{12 a} \big( 8 \mathcal{F}(a, -\frac{a^2}{2}) - \mathcal{F}(2a, -2a^2) \big).
        \end{align*}
        Hence
        \begin{align*}
            X_{\alpha} &= \frac{1}{24} \Big( \big( 8 \mathcal{F}(1, 1/2) - \mathcal{F}(2, 2) \big) + a^{-1} \big( 8 \mathcal{F}(a, -\frac{a^2}{2}) - \mathcal{F}(2a, -2a^2) \big) \Big), \text{ and } \\
            X_{\bar{\alpha}} &= \frac{1}{24} \Big( \big( 8 \mathcal{F}(1, 1/2) - \mathcal{F}(2, 2) \big) - a^{-1} \big( 8 \mathcal{F}(a, -\frac{a^2}{2}) - \mathcal{F}(2a, -2a^2) \big) \Big).
        \end{align*}
    
        Therefore, $\pi (X_{\alpha}) \in \mathcal{M}$ for every $\alpha \in \Phi$. 
        In particular, $\pi (\mathcal{L}(R)) \subset \mathcal{M}$. 
        Hence, $E_\pi(\Phi, R) \subset \mathcal{M}$. 
        By Lemma~\ref{lemma:EB09}, the set $E(R)$ generates $M_n(R)$ as an $R$-algebra. Therefore, $\mathcal{M} = M_n(R)$. 
        This completes the proof of the lemma.
\end{proof}


\section{Normalizer of \texorpdfstring{$G_{\pi,\sigma}(\Phi, R)$}{G(R)} and \texorpdfstring{$E'_{\pi,\sigma}(\Phi, R)$}{E(R)} in \texorpdfstring{$G_{\pi,\sigma}(\Phi, S)$}{G(S)}}\label{sec:normalizer_of_G_and_E}

This section aims to present a proof of the main theorem. We begin by proving that $N_{G_{\pi, \sigma} (\Phi, S)}(G_{\pi, \sigma} (\Phi, R)) = N_{G_{\pi, \sigma} (\Phi, S)}(E'_{\pi, \sigma} (\Phi, R))$. 
Since $E'_{\pi, \sigma} (\Phi, R)$ is a characteristic subgroup of $G_{\pi,\sigma} (\Phi, R)$ (see Theorem~\ref{char subgrp}), it follows that 
$$ N_{G_{\pi, \sigma} (\Phi, S)}(G_{\pi, \sigma} (\Phi, R)) \subset N_{G_{\pi,\sigma} (\Phi, S)}(E'_{\pi, \sigma} (\Phi, R)). $$ 
    
For the reverse inclusion, let $g \in N_{G_{\pi,\sigma} (\Phi, S)} (E'_{\pi, \sigma} (\Phi, R))$. 
Thus, we have $g E'_{\pi, \sigma} (\Phi, R) g^{-1} = E'_{\pi, \sigma} (\Phi, R).$
By Lemma~\ref{key lemma}, we have $g M_n (R) g^{-1} = M_n (R)$. 
This implies that $g GL_n (R) g^{-1} = GL_n (R)$. 
In particular, $g G_{\pi, \sigma} (\Phi, R) g^{-1} \subset GL_n (R)$.
On the other hand, $g G_{\pi, \sigma} (\Phi, R) g^{-1} \subset G_{\pi, \sigma} (\Phi, S)$ as $g \in G_{\pi, \sigma} (\Phi, S)$.
Therefore, we have 
\[
    g G_{\pi, \sigma}(\Phi, R) g^{-1} \subset GL_n(R) \cap G_{\pi, \sigma} (\Phi, S) = G_{\pi, \sigma} (\Phi, R).
\]
Thus, $g \in N_{G_{\pi, \sigma} (\Phi, S)}(G_{\pi, \sigma} (\Phi, R))$, which implies that $$N_{G_{\pi, \sigma} (\Phi, S)}(E'_{\pi, \sigma} (\Phi, R)) \subset N_{G_{\pi, \sigma} (\Phi, S)}(G_{\pi, \sigma} (\Phi, R)),$$ as desired.  

Now, it remains to prove that $N_{G_{\text{ad}, \sigma} (\Phi, S)} (E'_{\text{ad}, \sigma} (\Phi, R)) = G_{\text{ad}, \sigma} (\Phi, R).$ 
Clearly, $$G_{\text{ad}, \sigma} (\Phi, R) \subset N_{G_{\text{ad}, \sigma} (\Phi, S)} (G_{\text{ad}, \sigma} (\Phi, R)) = N_{G_{\text{ad}, \sigma} (\Phi, S)} (E'_{\text{ad}, \sigma} (\Phi, R)).$$ 
To establish the reverse inclusion, let $g \in N_{G_{\text{ad}, \sigma} (\Phi, S)} (E'_{\text{ad}, \sigma} (\Phi, R))$. 
Then, by definition, $$g E'_{\text{ad}, \sigma} (\Phi, R) g^{-1} = E'_{\text{ad}, \sigma} (\Phi, R).$$ 
Using Proposition~\ref{prop:tangentspace of E'(R)}, we conclude that $$g (\text{ad}(\mathcal{L}_\sigma (\Phi, R))) g^{-1} = \text{ad}(\mathcal{L}_\sigma (\Phi, R)).$$
By ``complexification", we get $$g (\text{ad}(\mathcal{L} (\Phi, R))) g^{-1} = \text{ad}(\mathcal{L} (\Phi, R)).$$
That is, $i_g$ is an automorphism of the Lie algebra $\text{ad}(\mathcal{L}(\Phi, R))$. 
Thus, $i_g$ can be decomposed as $i_g = i_{g'} \circ \delta$, where $g' \in G_{\text{ad}}(\Phi, R)$ and $\delta$ is a graph automorphism (see Theorem~\ref{thm:AK}). 
Consequently, $i_g$ is an automorphism of $E_{\text{ad}} (\Phi, R)$ that admits the same decomposition.
Since the intersection between the set of all graph automorphisms and the set of all inner automorphisms of $E_{\text{ad}} (\Phi, R)$ is trivial (see \cite{EA4}), we deduce that $\delta = \operatorname{id}$.
It follows that $g \cdot {g'}^{-1} = \lambda \cdot 1_n \in G_{\text{ad}}(\Phi, S)$ for some $\lambda \in S$. 
Since there are no scalar matrices in the group of adjoint type (as the centre is trivial, see \cite{EA&JH}), we conclude that $\lambda = 1$. 
Consequently, $g = g' \in G_{\text{ad}, \sigma}(\Phi, S) \cap G_{\text{ad}}(\Phi, R) = G_{\text{ad}, \sigma}(\Phi, R)$.
Therefore, $N_{G_{\text{ad}, \sigma} (\Phi, S)} (E'_{\text{ad}, \sigma} (\Phi, R)) \subset G_{\text{ad}, \sigma} (\Phi, R)$, as desired.
This completes the proof of Theorem~\ref{thm:normalizer_of_G_and_E}.

\begin{rmk}
    The Noetherian condition is only required to apply Theorem \ref{char subgrp}.
\end{rmk}



\end{document}